\makeatletter\@addtoreset{equation}{section}\makeatother
\makeatletter\@addtoreset{figure}{section}\makeatother
\makeatletter\@addtoreset{table}{section}\makeatother
\newtheorem{theorem}{Theorem}[section]
\newtheorem{proposition}[theorem]{Proposition}
\newtheorem{lemma}[theorem]{Lemma}
\newtheorem{corollary}[theorem]{Corollary}
\newcommand{\R}{{\mathbb R}}
\newcommand{\C}{{\mathbb C}}
\newcommand{\Z}{{\mathbb Z}}
\newcommand{\T}{{\mathbb T}}
\newcommand{\B}{{\mathbb B}}
\newcommand{\Proj}{{\mathbb P}}
\newcommand{\op}[1]{\!\!\mathop{\rm ~#1}\nolimits}
\newcommand{\fop}[1]{\!\!\mathop{\mbox{\rm \footnotesize ~#1}}\nolimits}
\newcommand{\scriptop}[1]{\!\!\mathop{\mbox{\rm \scriptsize ~#1}}\nolimits}
\newenvironment{proof}{\par\medskip\noindent{\bf Proof}~~}{
\unskip\nobreak\hfill\hbox{$\Box$}\par \bigskip}        
\newenvironment{remark}{\refstepcounter{theorem}\par\medskip\noindent{\bf Remark~\thetheorem~~}}{\unskip\nobreak\hfill\hbox{ $\oslash$}\par\bigskip}
\newcommand{\got}[1]{\mathfrak{#1}}
\title{Reduced phase space and toric variety coordinatizations
of Delzant spaces}
\author{J.J. Duistermaat\thanks{Research stimulated 
by a KNAW professorship} ~and A. Pelayo\thanks{Partly supported by
a Rackham Predoctoral fellowship}}
\begin{document}
\maketitle

\begin{abstract}
In this note we describe the natural 
coordinatizations of a Delzant space
defined as a reduced phase space 
(symplectic geometry view\--point)
and give explicit formulas for the 
coordinate transformations. 
For each fixed point of the torus action 
on the Delzant polytope, we have a 
maximal coordinatization of an open cell in 
the Delzant space which contains the fixed point.  
This cell is equal to the domain of definition of one 
of the natural coordinatizations of the Delzant space 
as a toric variety (complex algebraic geometry view\--point), 
and we give an explicit formula for the toric 
variety coordinates in terms of the reduced phase space 
coordinates. We use considerations in 
the maximal coordinate neighborhoods to give 
simple proofs of some of the basic facts 
about the Delzant space, as a reduced 
phase space, and as a toric variety.  
These can be viewed as a first application of 
the coordinatizations, and serve to make the presentation 
more self\--contained. 
\end{abstract}

\section{Introduction}
Let $(M,\,\sigma)$ be a smooth compact 
and connected symplectic manifold 
of dimension $2n$ and let $T$ be a torus which acts 
effectively on $(M,\,\sigma )$ by means of 
symplectomorphisms. If the action of $T$ on 
$(M,\,\sigma )$ is moreover Hamiltonian, then
$\op{dim}T \leq n$, and the image of the momentum 
mapping $\mu _T:M\to {\got t}^*$ is a convex polytope $\Delta$ 
in the dual space $\got{t}^*$ of $\got{t}$, where 
$\got{t}$ denotes the Lie algebra of $T$. In 
the maximal case when $\op{dim}T=n$, $(M, \, \sigma)$ 
is called a {\em Delzant space}. 

Delzant \cite[(*) on p. 323]{d} proved 
that in this case the polytope $\Delta$ is very special, a 
so\--called {\em Delzant polytope}, of which we 
recall the definition in Section \ref{delzantsec}. 
Furthermore Delzant \cite[Th. 2.1]{d} proved that 
two Delzant spaces are $T$\--equivariantly 
symplectomorphic if and only if their momentum mappings 
have the same image up to a translation by an element 
of ${\got t}^*$. Thirdly Delzant \cite[pp. 328, 329]{d} 
proved that for every Delzant polytope $\Delta$ 
there exists a  Delzant space such that $\mu _T(M)=\Delta$. 
This Delzant space is obtained as the {\em reduced phase space}  
for a linear Hamiltonian action of a torus $N$ on a symplectic vector space 
$E$, at a value $\lambda _N$ of the momentum mapping of the 
Hamiltonian $N$\--action, where $E$, $N$ and $\lambda _N$ 
are determined by the Delzant polytope. 

Finally Delzant \cite[Sec. 5]{d} observed that the Delzant polytope 
gives rise to a fan (= \'eventail in French), 
and that the Delzant space with Delzant polytope 
$\Delta$ is $T$\--equivariantly 
diffeomorphic to the {\em toric variety} $M^{\scriptop{toric}}$ 
defined by the fan. Here $M^{\scriptop{toric}}$ is 
a complex $n$\--dimensional complex analytic manifold, 
and the action of the real torus $T$ on $M^{\scriptop{toric}}$ 
has an extension to a complex analytic action on $M^{\scriptop{toric}}$ 
of the complexification $T_{\C}$ of $T$. 
In our description in Section \ref{torvarsec} of the toric variety 
$M^{\scriptop{toric}}$ we do not use fans. The information,  
for each vertex $v$ of $\Delta$, which codimension one faces 
of $\Delta$ contain $v$, already suffices to define $M^{\scriptop{toric}}$.  

In this note we show that the construction of the 
Delzant space $M$ as a reduced phase space leads, for every 
vertex $v$ of the Delzant polytope, to a natural 
coordinatization $\varphi _v$ of a $T$\--invariant open cell 
$M_v$ in $M$, where $M_v$ contains the unique fixed point 
$m_v$ in $M$ of the $T$\--action such that $\mu _T(m_v)=v$.  
We give an explicit construction of the inverse $\psi _v$ of 
$\varphi _v$, which is a maximal diffeomorphism in the sense 
of  Remark \ref{pelayorem}.  The construction of $\psi _v$ 
originated in an attempt to extend the equivariant symplectic 
ball embeddings from $(\B^{2n}_r, \, \sigma_0) \subset (\C^n, \, \sigma_0)$ 
into the Delzant space $(M, \, \sigma)$
in Pelayo \cite{pelayo} by maximal 
equivariant symplectomorphisms from open 
neighborhoods of the origin in $\C ^n$ into the 
Delzant space $(M, \, \sigma)$. If $v$ and $w$ are two 
different vertices, then the coordinate transformation 
$\varphi _w\circ{\varphi _v}^{-1}$
is given by the explicit formulas (\ref{inFv}), (\ref{notinFv}). 

Let $\Sigma$ be the set of all strata of the orbit type stratification of $M$ 
for the $T$\--action. Then the domain of definition $M_v$ of $\varphi _v$ 
is equal to the union of all $S\in\Sigma$ such that 
the fixed point $m_v$ belongs to the closure of $S$ in $M$, see 
Corollary \ref{stratacor}. The strata $S\in\Sigma$ are also the 
orbits in the toric variety $M^{\scriptop{toric}}\simeq M$ for the action 
of the complexification $T_{\C}$ of the real torus $T$, and 
the domain of definition $M_v$ of $\varphi _v$ 
is equal to the domain of definition of a 
natural complex analytic $T_{\C}$\--equivariant 
coordinatization $\Phi _v$ of a $T_{\C }$\--invariant 
open cell. The diffeomorphism $\Phi _v\circ {\varphi _v}^{-1}$, 
which sends the reduced phase space coordinates to the 
toric variety coordinates, maps $U_v:=\varphi _v(M_v)$ diffeomorphically 
onto a complex vector space, and 
is given by the explicit formulas (\ref{zzeta}). 

In the toric variety 
coordinates the complex structure is the standard 
one and the coordinate transformations are  
relatively simple Laurent monomial 
transformations, whereas the
symplectic form is generally given by quite 
complicated algebraic functions. On the other hand, in the 
reduced phase space coordinates the symplectic 
form is the standard one, but the coordinate transformations, 
and also the complex structure, have a more complicated 
appearance.

Let $F$ denote the set of all $d$ codimension one faces 
of $\Delta$ and, for every vertex $v$ of $\Delta$, let 
$F_v$ denote the set of all $f\in F$ such that $v\in f$. 
Note that $\# (F_v)=n$ for every vertex $v$ of $\Delta$. 
For any sets $A$ and $B$, let $A^B$ denote the set of all 
$A$\--valued functions on $B$. If $A$ is a field and 
the set $B$ is finite, then $A^B$ is a $\# (B)$\--dimensional 
vector space over $A$. One of the technical points in this 
paper is the efficient organization of proofs and formulas 
made possible by viewing the Delzant space as a reduction 
of the vector space $\C ^F$, and letting, for each vertex $v$, 
the coordinatizations $\varphi _v$ and $\Phi _v$ take 
their values in $\C ^{F_v}$. This leads to a natural 
projection $\rho _v:\C ^F\to\C ^{F_v}$ obtained by the restriction 
of functions on $F$ to $F_v\subset F$. For each 
vertex $v$ the complex vector space  $\C ^{F_v}$ is isomorphic 
to $\C ^n$, but the isomorphism depends on an enumeration 
of $F_v$, the introduction of which would lead to an unnecessary 
complication of the combinatorics. Similarly our torus $T$ is isomorphic 
to $\R ^n/\Z ^n$, but the isomorphism depends on the choice 
of a $\Z$\--basis of the integral lattice ${\got t}_{\Z}$ in the 
Lie algebra ${\got t}$ of $T$. 
As for each vertex $v$ a different $\Z$\--basis of ${\got t}_{\Z}$ 
appears, we also avoid such a choice, keeping $T$ in its 
abstract form. We hope and trust that this will not lead to 
confusion with our main references 
Delzant \cite{d}, Audin \cite{audin} and Guillemin \cite{g} 
about Delzant spaces, where $\C ^F$, each $\C ^{F_v}$, 
and $T$  is denoted as $\C ^d$, $\C ^n$, and 
$\R ^n/\Z ^n$, respectively. 

The organization of this manuscript is as follows. 
In Section \ref{delzantsec} we review the definition of the 
reduced phase Delzant space, 
and introduce the notations which will 
be convenient for our purposes. In Section \ref{Zsec}  we define the 
reduced phase space coordinatizations. In Section 
\ref{coordtransfsec} we give explicit formulas for 
the coordinate transformations and describe the 
reduced phase space Delzant space as obtained by gluing together 
bounded open subsets of $n$\--dimensional complex 
vector spaces with these coordinate transformations as the gluing 
maps. In Section \ref{torvarsec} we review the definition 
of the toric variety defined by the Delzant polytope, 
prove that the natural mapping from the reduced phase space 
to the toric variety is a diffeomorphism, and compare 
the coordinatizations of Section \ref{Zsec} with the 
natural coordinatizations of the toric variety. 
In Section \ref{examplesec} we present 
these computations for the two simplest classes of  examples,
the complex projective spaces and the Hirzebruch surfaces.

\section{The reduced phase space}
\label{delzantsec} 
Let $T$ be an $n$\--dimensional torus, a compact, connected, commutative 
$n$\--dimensional real Lie group, with Lie algebra ${\got t}$. 
It follows that the exponential mapping $\op{exp}:{\got t}\to T$ 
is a surjective homomorphism from the additive Lie group 
${\got t}$ onto $T$. Furthermore, ${\got t}_{\Z}:=\op{ker}(\op{exp})$ is a 
discrete subgroup of $({\got t},\, +)$ such that the exponential 
mapping induces an isomorphism from 
${\got t}/{\got t}_{\Z}$ onto $T$, which we also denote by $\op{exp}$. 
Note that ${\got t}_{\Z}$ is defined in terms of the 
group $T$ rather than only the Lie algebra ${\got t}$, but 
the notation ${\got t}_{\Z}$ has the advantage over 
the more precise notation $T_{\Z}$ that it reminds us of the fact  
it is a subgroup of the additive group ${\got t}$. 

Because ${\got t}/{\got t}_{\Z}$ is compact, ${\got t}_{\Z}$ has a $\Z$\--basis 
which at the same time is an $\R$\--basis of ${\got t}$, and 
each $\Z$\--basis of ${\got t}_{\Z}$ is an $\R$\--basis of ${\got t}$.  
Using coordinates with respect to an ordered $\Z$\--basis of 
${\got t}_{\Z}$, we obtain a linear isomorphism from ${\got t}$ 
onto $\R ^n$ which maps ${\got t}_{\Z}$ onto $\Z ^n$, and therefore 
induces an isomorphism from $T$ onto $\R ^n/\Z ^n$. 
For this reason, ${\got t}_{\Z}$ is called the {\em integral lattice} 
in ${\got t}$. However, because we do not have a preferred 
$\Z$\--basis of ${\got t}_{\Z}$, we do not write $T=\R ^n/\Z ^n$. 

Let $\Delta$ be an $n$\--dimensional convex polytope in ${\got t}^*$. 
We denote by $F$ and $V$ the set of all codimension one faces 
and vertices of $\Delta$, respectively. Note that, as a face is 
defined as the set of points of the closed convex set on which a given 
linear functional attains its minimum, see Rockafellar 
\cite[p.162]{r}, every face of $\Delta$ is compact. 
For every $v\in V$, we write 
\[
F_v=\{ f\in F\mid v\in f\} .
\]
$\Delta$ is called a {\em Delzant polytope} if 
it has the following properties, see Guillemin \cite[p. 8]{g}. 
\begin{itemize}
\item[i)] For each $f\in F$ there is an $X_f\in {\got t}$ 
and $\lambda _f\in\R$ such that the hyperplane which contains 
$f$ is equal to the set of all $\xi\in {\got t}^*$ 
such that $\langle X_f,\,\xi\rangle +\lambda _f=0$, and 
$\Delta$ is contained in the set of all $\xi\in {\got t}^*$ 
such that $\langle X_f,\,\xi\rangle +\lambda _f\geq 0$. The vector 
$X_f$ and constant $\lambda _f$ are made unique by requiring 
that they are not 
an integral multiple of another such vector and 
constant, respectively. 
\item[ii)] For every $v\in V$, the $X_f$ with $f\in F_v$ form 
a $\Z$\--basis of the integral lattice ${\got t}_{\Z}$ in ${\got t}$. 
\end{itemize}
It follows that 
\begin{equation}
\Delta =\{\xi\in {\got t}^*\mid 
\langle X_f,\,\xi\rangle +\lambda _f\geq 0
\quad\mbox{\rm  for every}\quad f\in F\} . 
\label{Deltaineq}
\end{equation}
Also, $\# (F_v)=n$ for every $v\in V$, which already 
makes the polytope $\Delta$ quite special. 
In the sequel we assume that $\Delta$ is a given 
Delzant polytope in ${\got t}^*$. 

For any $z\in\C ^F$ and $f\in F$ we write $z(f)=z_f$, 
which we view as the coordinate of the vector $z$ with the index $f$. 
Let $\pi$ 
be the real linear map from $\R ^F$ to ${\got t}$ defined by  
\begin{equation}
\pi (t):=\sum_{f\in F}\, t_f\, X_f,\quad t\in\R ^F.
\label{pidef}
\end{equation}
Because, for any vertex 
$v$, the $X_f$ with $f\in F_v$ form a $\Z$\--basis 
of ${\got t}_{\Z}$ which is also an $\R$\--basis of ${\got t}$, 
we have $\pi (\Z ^F)={\got t}_{\Z}$ and $\pi (\R ^F)={\got t}$. 
It follows that $\pi$ induces a 
surjective homomorphism of Lie groups $\pi '$ from 
the torus $\R ^F/\Z ^F=(\R /\Z)^F$ onto 
${\got t}/{\got t}_{\Z}$, and we have the corresponding 
surjective homomorphism 
$\op{exp}\circ\pi '$ from $\R ^F/\Z ^F$ onto $T$.  

Write ${\got n}:=\op{ker}\pi$, a linear subspace of 
$\R ^F$, and $N=\op{ker}(\op{exp}\circ\pi ')$, 
a compact commutative 
subgroup of the torus $\R ^F/\Z ^F$. 
Actually, $N$ is connected, see Lemma 
\ref{complem} below, and therefore isomorphic 
to ${\got n}/{\got n}_{\Z}$, where 
${\got n}_{\Z}:={\got n}\cap\Z ^F$ is the integral lattice 
in ${\got n}$ of the torus 
$N$. \footnote{We did not find a proof of the connectedness 
of $N$ in \cite{d}, \cite{audin}, or 
\cite{g}.}

On the complex vector space $\C ^F$ of all complex\--valued functions 
on $F$ we have the action of the torus $\R ^F/\Z ^F$, where 
$t\in\R ^F/\Z ^F$ maps $z\in\C ^F$ to the element 
$t\cdot z\in\C ^F$ defined by 
\[
(t\cdot z)_f=\op{e}^{2\pi\scriptop{i}\, t_f}\, z_f,\quad f\in F. 
\]
The infinitesimal action of $Y\in\R ^F=\op{Lie}(\R ^F/\Z ^F)$ 
is given by 
\[
(Y\cdot z)_f=2\pi\op{i}\, Y_f\,z_f,
\]
which is a Hamiltonian vector field defined by the function 
\begin{equation}
z\mapsto\langle Y,\, \mu (z)\rangle 
=\sum_{f\in F} Y_f\, |z_f|^2/2=\sum_{f\in F}\, Y_f\, ({x_f}^2+{y_f}^2)/2,
\label{mu}
\end{equation}
and with respect to the symplectic form 
\begin{equation}
\sigma :=(\op{i}/4\pi)\, \sum_{f\in F}\,\op{d}\! z_f
\wedge\op{d}\!\overline{z}_f
=(1/2\pi )\,\sum_{f\in F}\,\op{d}\! x_f\wedge\op{d}\! y_f, 
\label{sigma}
\end{equation}
if $z_f=x_f+\op{i}y_f$, with $x_f,\, y_f\in\R$. 
Here the factor $1/2\pi$ is introduced in order to avoid 
an integral lattice  $(2\pi\,\Z )^F$ instead of our $\Z ^F$. 

Because the right hand side of (\ref{mu}) depends linearly on $Y$, 
we can view $\mu (z)$ as an element of $(\R ^F)^*\simeq\R ^F$, 
with the coordinates
\begin{equation}
\mu (z)_f=|z_f|^2/2=({x_f}^2+{y_f}^2)/2, \quad f\in F.  
\label{muf}
\end{equation}
In other words, the action of $\R ^F/\Z ^F$ on $\C ^F$ is Hamiltonian, 
with respect to the symplectic form $\sigma$ and with 
momentum mapping $\mu :\C ^F\to (\op{Lie}(\R ^F/\Z ^F))^*$ 
given by (\ref{mu}), or equivalently (\ref{muf}).  

It follows that the subtorus $N$ of $\R ^F/\Z ^F$ acts on 
$\C ^F$ in a Hamiltonian fashion, with momentum mapping 
\begin{equation}
\mu _N:=\iota _{\got n}^*\circ\mu :\C ^F\to {\got n}^*,
\label{muN}
\end{equation}
where $\iota _{\got n}:{\got n}\to\R ^F$ denotes the identity 
viewed as a linear mapping from ${\got n}\subset\R ^F$ 
to $\R ^F$, and its transposed $\iota _{\got n}^*:(\R ^F)^*\to {\got n}^*$ 
is the map which assigns to each linear form on $\R ^F$ its 
restriction to ${\got n}$.  

Write $\lambda _N=\iota _{\got n}^*(\lambda)$, 
where $\lambda$ denotes the element of $(\R ^F)^*\simeq\R ^F$ 
with the coordinates $\lambda _f$, $f\in F$. 
It follows from Guillemin \cite[Th. 1.6 and Th. 1.4]{g}  that 
$\lambda _N$ is a regular value of $\mu _N$, hence the level set  
$Z:={\mu _N}^{-1}(\{ \lambda _N\})$ of $\mu _N$ 
for the level $\lambda _N$ is a smooth 
submanifold of $\C ^F$, and that the action of $N$ on $Z$ 
is proper and free. As a consequence the $N$\--orbit space 
$M=M_{\Delta}:=Z/N$ is a smooth $2n$\--dimensional manifold such that 
the projection $p:Z\to M$ exhibits $Z$ as a principal $N$\--bundle 
over $M$. Moreover, there is a unique symplectic form 
$\sigma _M$ on $M$ such that $p^*\sigma _M={\iota _Z}^*\sigma$, 
where $\iota _Z$ is the identity viewed as a smooth mapping from 
$Z$ to $\C ^F$. 

\begin{remark}
Guillemin \cite{g} used the momentum mapping 
$\mu _N-\lambda _N$ instead of $\mu _N$, such that the reduction 
is taken at the zero level of his momentum mapping. We follow 
Audin \cite[Ch. VI, Sec. 3.1]{audin} in that we use the momentum 
mapping $\mu _N$ for the $N$\--action, which does not depend on 
$\lambda$, and do the reduction at the level $\lambda _N$. 
\label{levelrem}
\end{remark}

The symplectic manifold 
$(M,\,\sigma _M)$ is the {\em Marsden\--Weinstein reduction} 
of the symplectic manifold $(\C ^F,\,\sigma )$ 
for the Hamiltonian $N$\--action at the level $\lambda _N$ of the 
momentum mapping, as defined in Abraham and Marsden 
\cite[Sec. 4.3]{am}. On the $N$\--orbit space $M$, 
we still have the action of the 
torus $(\R ^F/\Z ^F)/N\simeq T$, with momentum mapping 
$\mu _T:M\to {\got t}^*$ determined by 
\begin{equation}
\pi ^*\circ\mu _T\circ p =(\mu -\lambda )|_Z. 
\label{muT}
\end{equation}
The torus $T$ acts effectively on $M$ and $\mu _T(M)=\Delta$, 
see Guillemin \cite[Th. 1.7]{g}. Actually, all these properties 
of the reduction will also follow in a simple way from our description 
in Section \ref{Zsec} of $Z$ in term of the coordinates $z_f$, $f\in F$. 

The symplectic manifold 
$M_{\Delta}$ together with this Hamiltonian $T$\--action 
is called the {\em Delzant space defined by $\Delta$}, 
see Guillemin, \cite[p. 13]{g}. This proves the existence 
part \cite[pp. 328, 329]{d} of Delzant's theory.

\section{The reduced phase space coordinatizations.}
\label{Zsec}
For any $v\in V$, let $\iota _v:={\rho _v}^*: (\R ^{F_v})^*\to (\R ^F)^*$ 
denote the transposed of the restriction projection $\rho _v:\R ^F\to\R ^{F_v}$. 
If in the usual way we identify  $(\R ^{F_v})^*$ and $(\R ^F)^*$  
with $\R ^{F_v}$ and $\R ^F$, respectively, then 
$\iota _v:\R ^{F_v}\to \R ^F$  
is the embedding defined by 
$\iota _v(x)_f=x_f$ if $f\in F_v$ and $\iota _v(x)_{f'}=0$ 
if $f'\in F$, $f'\notin F_v$. Because $\iota _v$ maps 
$\Z ^{F_v}$ into $\Z ^F$ and 
$\iota _v(\R ^{F_v})\cap\Z ^F=\iota _v(\Z ^{F_v})$, it induces an 
embedding of the $n$\--dimensional torus $\R ^{F_v}/\Z ^{F_v}$ 
into $\R ^F/\Z ^F$, which we also denote by $\iota _v$.

\begin{lemma}
With these notations, $\R ^F$, $\Z ^F$, and $\R ^F/\Z ^F$ are the direct 
sum of ${\got n}$ and $\iota _v(\R ^{F_v})$, 
${\got n}\cap\Z ^n$ and $\iota _v(\Z ^{F_v})$, and 
$N$ and $\iota _v(\R ^{F_v}/\Z ^{F_v})$, respectively. 

It follows that $N$ is connected, a torus, with integral lattice 
equal to ${\got n}\cap\Z ^F$. It also follows that 
$\pi\circ\iota _v$ is an isomorphism from 
the torus $\R ^{F_v}/\Z ^{F_v}$ onto the torus $T$. 
\label{complem}
\end{lemma}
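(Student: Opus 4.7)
The plan is to reduce everything to the observation that condition (ii) on the Delzant polytope says precisely that the composition $\pi\circ\iota _v:\R ^{F_v}\to\got{t}$ is an $\R$\--linear isomorphism which sends $\Z ^{F_v}$ bijectively onto $\got{t}_{\Z}$. Indeed, by definition $(\pi\circ\iota _v)(x)=\sum _{f\in F_v}x_f\, X_f$, and condition (ii) states that $\{X_f\mid f\in F_v\}$ is a $\Z$\--basis of $\got{t}_{\Z}$, which is simultaneously an $\R$\--basis of $\got{t}$.

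From this single fact the first direct sum claim is immediate. The intersection $\iota _v(\R ^{F_v})\cap\got{n}$ is trivial, since any element of the intersection is of the form $\iota _v(x)$ with $\pi(\iota _v(x))=0$, forcing $x=0$; and $\R ^F=\got{n}+\iota _v(\R ^{F_v})$, since for any $t\in\R ^F$ there is a unique $x\in\R ^{F_v}$ with $\pi(\iota _v(x))=\pi(t)$, whence $t-\iota _v(x)\in\got{n}$. The integral version follows by exactly the same argument, with the extra input that $\pi(\Z ^F)=\got{t}_{\Z}$ and that $\pi\circ\iota _v$ maps $\Z ^{F_v}$ bijectively onto $\got{t}_{\Z}$: given $t\in\Z ^F$, the unique $x\in\R ^{F_v}$ above then lies in $\Z ^{F_v}$, so $t-\iota _v(x)\in\got{n}\cap\Z ^F$, and uniqueness of the decomposition is inherited from the real one.

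Passing to the quotient $\R ^F/\Z ^F$ using the direct sum decomposition of $\Z ^F$ gives a group isomorphism
\[
\R ^F/\Z ^F\simeq\bigl(\got{n}/(\got{n}\cap\Z ^F)\bigr)\times\bigl(\iota _v(\R ^{F_v})/\iota _v(\Z ^{F_v})\bigr).
\]
It remains to identify the first factor with $N$. By definition $[t]\in N=\op{ker}(\op{exp}\circ\pi ')$ iff $\pi(t)\in\got{t}_{\Z}$; writing $t=n+\iota _v(x)$ with $n\in\got{n}$ and $x\in\R ^{F_v}$, this becomes $\pi(\iota _v(x))\in\got{t}_{\Z}$, equivalently $x\in\Z ^{F_v}$, i.e.\ $\iota _v(x)\in\Z ^F$. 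Hence modulo $\Z ^F$ the class of $t$ coincides with that of $n$, so $N$ is the image of $\got{n}$ in $\R ^F/\Z ^F$. This proves the direct sum decomposition of tori. Connectedness of $N$ is then automatic, since $\got{n}$ is connected; thus $N\simeq\got{n}/(\got{n}\cap\Z ^F)$ is a compact connected commutative Lie group with integral lattice $\got{n}\cap\Z ^F$. Finally, the $\R$\--linear isomorphism $\pi\circ\iota _v$ sending $\Z ^{F_v}$ onto $\got{t}_{\Z}$ descends to the claimed isomorphism $\R ^{F_v}/\Z ^{F_v}\to T$.

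The main (and essentially only) obstacle is bookkeeping: one must be careful that the decomposition of lattices is genuinely a direct sum of abelian groups and not merely the restriction of the real direct sum to $\Z ^F$. This is exactly where one uses the surjectivity of $\pi\circ\iota _v:\Z ^{F_v}\to\got{t}_{\Z}$ from the Delzant condition (ii), rather than mere injectivity; without it, one would only obtain a finite\--index inclusion $(\got{n}\cap\Z ^F)\oplus\iota _v(\Z ^{F_v})\subset\Z ^F$, and $N$ might fail to be connected.
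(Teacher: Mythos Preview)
Your proof is correct and follows the same approach as the paper: both hinge on the single observation that condition (ii) makes $\pi\circ\iota_v$ an isomorphism $\R^{F_v}\to\got{t}$ carrying $\Z^{F_v}$ onto $\got{t}_{\Z}$, and then decompose an arbitrary $t\in\R^F$ (resp.\ $\Z^F$) as $t=(t-\iota_v(x))+\iota_v(x)$ with $x=(\pi\circ\iota_v)^{-1}(\pi(t))$. The paper's proof is terser and stops after the $\R^F$ and $\Z^F$ decompositions, leaving the passage to the quotient and the identification of $N$ implicit; your version spells these out explicitly, which is a welcome addition rather than a different method.
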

\begin{proof}
Let $t\in\R ^F$. Because the 
$X_f$, $f\in F_v$, form an $\R$\--basis 
of ${\got t}$, there exists a unique 
$t^v\in\R ^{F_v}$, such that 
\[
\pi (t)=\sum _{f\in F_v}\, (t^v)_f\, X_f=\pi (\iota _v(t^v)),  
\]
that is, $t-\iota _v(t^v)\in {\got n}$. 
Moreover, because the $X_f$, $f\in F_v$, also form a 
$\Z$\--basis of ${\got t}_{\Z}$, we have that $t^v\in \Z ^{F_v}$, 
and therefore $t-\iota _v(t^v)\in {\got n}\cap\Z ^F$,  
if $t\in \Z ^F$. 
\end{proof}
\begin{lemma}
We have $z\in Z$ if and only if $\mu (z)-\lambda\in 
\pi ^*({\got t}^*)$. More explicitly, if and only if there 
exists a $\xi\in {\got t}^*$ such that 
\begin{equation}
|z_f|^2/2-\lambda _f=\langle X_f,\,\xi\rangle 
\quad\mbox{\rm for every}\quad f\in F. 
\label{xiz}
\end{equation}
When $z\in Z$, the $\xi$ in (\ref{xiz}) is uniquely determined.  

Furthermore, 
$Z=(\mu -\lambda )^{-1}(\pi ^*(\Delta ))$, 
$(\mu -\lambda )(Z)=\pi ^*(\Delta )$, and $Z$ is a 
compact subset of $\C ^F$. 
\label{Zlem}
\end{lemma}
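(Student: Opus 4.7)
This lemma is essentially a translation exercise through the duality between the surjection $\pi\colon\R^F\to{\got t}$ and the inclusion $\iota_{\got n}\colon{\got n}\hookrightarrow\R^F$. The plan is first to establish the abstract equivalence $z\in Z \Leftrightarrow \mu(z)-\lambda\in\pi^*({\got t}^*)$, then to rewrite it in coordinates, and finally to cut down to $\xi\in\Delta$ using (\ref{Deltaineq}) together with the non\--negativity of $|z_f|^2/2$.

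For the first step, the short exact sequence $0\to{\got n}\to\R^F\stackrel{\pi}{\to}{\got t}\to 0$ dualizes to the exact sequence $0\to{\got t}^*\stackrel{\pi^*}{\to}(\R^F)^*\stackrel{\iota_{\got n}^*}{\to}{\got n}^*\to 0$; in particular $\op{ker}\iota_{\got n}^*=\pi^*({\got t}^*)$. Since $Z=\mu_N^{-1}(\lambda_N)$ with $\mu_N=\iota_{\got n}^*\circ\mu$ and $\lambda_N=\iota_{\got n}^*(\lambda)$, membership $z\in Z$ is equivalent to $\iota_{\got n}^*(\mu(z)-\lambda)=0$, hence by exactness to $\mu(z)-\lambda\in\pi^*({\got t}^*)$. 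Evaluating $\pi^*\xi$ as an element of $(\R^F)^*\simeq\R^F$, the definition (\ref{pidef}) of $\pi$ shows that its $f$\--th component equals $\langle X_f,\xi\rangle$; combined with (\ref{muf}) this yields the component\--wise form (\ref{xiz}). Uniqueness of $\xi$ for a given $z\in Z$ is just the injectivity of $\pi^*$, which is the transpose of the surjection $\pi$.

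For the remaining assertions I would observe that $\xi\in{\got t}^*$ admits a solution $z$ of (\ref{xiz}) if and only if $\langle X_f,\xi\rangle+\lambda_f\geq 0$ for every $f\in F$, since this quantity must equal $|z_f|^2/2\geq 0$; by (\ref{Deltaineq}) this holds precisely when $\xi\in\Delta$. Conversely, every $\xi\in\Delta$ is the image under $\mu-\lambda$ of any $z\in\C^F$ with $|z_f|^2/2=\langle X_f,\xi\rangle+\lambda_f$, and such a $z$ automatically lies in $Z$. Thus $Z=(\mu-\lambda)^{-1}(\pi^*(\Delta))$ and $(\mu-\lambda)(Z)=\pi^*(\Delta)$ at once. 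Compactness of $Z$ then follows: $Z$ is closed as the preimage of $\{\lambda_N\}$ under the continuous map $\mu_N$, and bounded because on $Z$ each $|z_f|^2$ is majorized by $2\max_{\xi\in\Delta}(\langle X_f,\xi\rangle+\lambda_f)$, which is finite since $\Delta$ is compact.

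I anticipate no serious obstacle; the one step worth explicit mention is exactness in the middle of the dual sequence, that is, the identification of the annihilator of $\op{im}\iota_{\got n}={\got n}=\op{ker}\pi$ in $(\R^F)^*$ with $\op{im}\pi^*$, which is standard linear algebra.
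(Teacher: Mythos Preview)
Your proof is correct and follows essentially the same approach as the paper: identify $\op{ker}\iota_{\got n}^*$ with $\op{im}\pi^*$ via duality, read off the coordinate form (\ref{xiz}), deduce uniqueness from injectivity of $\pi^*$, and use the non\--negativity of $|z_f|^2/2$ together with (\ref{Deltaineq}) to pin down $\xi\in\Delta$. The only cosmetic difference is in the compactness step: the paper invokes properness of $\mu-\lambda$ and compactness of $\pi^*(\Delta)$, whereas you argue directly that $Z$ is closed and bounded in $\C^F$; both are fine.
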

\begin{proof}
The kernel of $\iota _{\got n}^*$ is equal to the space of all linear 
forms on $\R ^F$ which vanish on ${\got n}:=\op{ker}\pi$, and therefore 
$\op{ker}\iota _{\got n}^*$ is equal to the image of 
$\pi ^*:{\got t}^*\to (\R ^F)^*$. Because $\pi$ is surjective, 
$\pi ^*$ is injective, which proves the uniqueness of $\xi$.  

It follows from (\ref{xiz}) that $\langle X_f,\,\xi\rangle +  
\lambda _f\geq 0$ for every $f\in F$, and therefore 
$\xi\in\Delta$ in view of (\ref{Deltaineq}). Conversely, 
if $\xi\in\Delta$, then there exists for every 
$f\in F$ a complex number $z_f$ such that 
$|z_f|^2/2=\langle X_f,\,\xi\rangle +\lambda _f$, 
which means that $z\in Z$ and $(\mu -\lambda )(z)=\pi ^*(\xi )$. 
The set $\pi ^*(\Delta )$ is compact because $\Delta$ is compact 
and $\pi ^*$ is continuous. Because the mapping $\mu -\lambda$ 
is proper, it follows that $Z=(\mu -\lambda )^{-1}(\pi ^*(\Delta ))$ 
is compact.    
\end{proof}
Let $v\in V$. The $X_f$, $f\in F_v$, form an $\R$\--basis of 
${\got t}$, and therefore there exists for each 
$z\in\C ^{F_v}$ a unique $\xi =\mu _v(z)\in {\got t}^*$ such that 
(\ref{xiz}) holds for every $f\in F_v$.  That is, the mapping 
$\mu _v:\C ^{F_v}\to {\got t}^*$ 
is defined by the equations
\begin{equation}
|z_f|^2/2-\lambda _f=\langle X_f,\,\mu _v(z)\rangle ,\quad z\in\C ^{F_v},
\quad f\in F_v. 
\label{muv}
\end{equation}
In other words, $\mu _v$ is defined by the formula 
\begin{equation}
\rho _v\circ\pi ^*\circ\mu _v=\rho _v\circ (\mu -\lambda )\circ\iota _v,
\label{muvformula}
\end{equation}
where $\rho _v$ denotes the restriction projection 
from $\R ^F$ onto $\R ^{F_v}$.  
\begin{lemma}
If we let $T$ act on $\C ^{F_v}$ via $\R ^{F_v}/\Z ^{F_v}$ 
by means of $(t,\, z)\mapsto (\pi\circ\iota _v)^{-1}(t)\cdot z$, 
then $\mu _v:\C ^{F_v}\to {\got t}^*$ 
is a momentum mapping for this Hamiltonian action of 
$T$ on $\C ^{F_v}$, with $\mu _v(0)=v$. Here the symplectic form 
on $\C ^{F_v}$ is equal to 
\begin{equation}
\sigma :=(\op{i}/4\pi)\, \sum_{f\in F_v}\,\op{d}\! z_f
\wedge\op{d}\!\overline{z}_f
=(1/2\pi )\,\sum_{f\in F_v}\,\op{d}\! x_f\wedge\op{d}\! y_f, 
\label{sigmav}
\end{equation}
that is, (\ref{sigma}) with $F$ replaced by $F_v$. 

Let $\rho _v$ denote the restriction projection 
from $\C ^F$ onto $\C ^{F_v}$, and let $U_v$ be the interior 
of the subset $\rho _v(Z)$ of $\C ^{F_v}$. Write 
\begin{equation}
\Delta _v:=\Delta\setminus
\bigcup_{f'\in F\setminus F_v}f'. 
\label{Deltav}
\end{equation}
Then 
$\rho _v(Z)={\mu _v}^{-1}(\Delta )$,
$\mu _v(\rho _v(Z))=\Delta $, 
$U_v={\mu _v}^{-1}(\Delta _v)$, 
and $\mu _v(U_v)=\Delta _v$. 
In particular $\rho _v(Z)$ is a compact subset of 
$\C ^{F_v}$, and $U_v$ is a bounded and connected 
open neighborhood 
of $0$ in $\C ^{F_v}$. 
\label{Zvlem}
\end{lemma}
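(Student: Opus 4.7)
The plan is to split the lemma into four blocks---momentum mapping with base point, level set identities, interior characterization, and connectedness---and to single out $U_v=\mu_v^{-1}(\Delta_v)$ as the main technical step. For the momentum mapping claim I would transport the Hamiltonian $\R^{F_v}/\Z^{F_v}$-action on $(\C^{F_v},\sigma)$ given by (\ref{mu}) and (\ref{sigmav}) through the isomorphism $\pi\circ\iota_v$ of Lemma \ref{complem}. For $X\in\got{t}$, setting $Y:=(\pi\circ\iota_v)^{-1}(X)\in\R^{F_v}$ gives $X=\sum_{f\in F_v}Y_f X_f$; combining this with (\ref{muv}),
\[
\sum_{f\in F_v}Y_f\,|z_f|^2/2\;=\;\langle X,\,\mu_v(z)\rangle\;+\;\sum_{f\in F_v}Y_f\,\lambda_f,
\]
so the natural Hamiltonian for the $X$-action differs from $\langle X,\mu_v(z)\rangle$ only by a constant in $z$, and $\mu_v$ is a momentum mapping. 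Setting $z=0$ in (\ref{muv}) yields $\langle X_f,\mu_v(0)\rangle=-\lambda_f$ for every $f\in F_v$; since $v$ lies on each such $f$ and $(X_f)_{f\in F_v}$ is an $\R$-basis of $\got{t}$, this forces $\mu_v(0)=v$.

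For the level set identities I would use Lemma \ref{Zlem}: $w\in Z$ iff (\ref{xiz}) holds for all $f\in F$ and some $\xi\in\Delta$. The subsystem of (\ref{xiz}) for $f\in F_v$ already determines $\xi=\mu_v(\rho_v(w))$ uniquely, so extending a given $z\in\C^{F_v}$ to some $w\in Z$ amounts to choosing $w_f\in\C$ with $|w_f|^2/2=\langle X_f,\mu_v(z)\rangle+\lambda_f$ for $f\in F\setminus F_v$. This is solvable precisely when the right hand sides are nonnegative, i.e.\ when $\mu_v(z)\in\Delta$ by (\ref{Deltaineq}). Hence $\rho_v(Z)=\mu_v^{-1}(\Delta)$, the surjection $\mu_v(\rho_v(Z))=\Delta$ follows from the same construction, and $\rho_v(Z)$ is compact as the continuous image of the compact set $Z$.

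The main step is $U_v=\mu_v^{-1}(\Delta_v)$. I would write $\mu_v(z)=L(q_v(z)-\kappa)$, where $q_v(z)_f:=|z_f|^2/2$, $\kappa:=(\lambda_f)_{f\in F_v}\in\R^{F_v}$, and $L:\R^{F_v}\to\got{t}^*$ is the linear isomorphism dual to the basis $(X_f)_{f\in F_v}$. A polydisc check shows that $q_v$ is open onto $\R^{F_v}_{\geq 0}$, so $\mu_v$ is an open continuous surjection onto the cone
\[
C:=\{\xi\in\got{t}^*\mid \langle X_f,\xi\rangle+\lambda_f\geq 0\;\mbox{for every}\;f\in F_v\}.
\]
For any continuous open surjection the general point-set identity $\op{int}(\mu_v^{-1}(\Delta))=\mu_v^{-1}(\op{int}_C(\Delta))$ holds. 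Since $\Delta$ is cut out of $C$ by the remaining inequalities $\langle X_{f'},\xi\rangle+\lambda_{f'}\geq 0$ for $f'\in F\setminus F_v$, its interior relative to $C$ equals $\Delta\setminus\bigcup_{f'\notin F_v}f'=\Delta_v$. This gives $U_v=\mu_v^{-1}(\Delta_v)$, and combined with $\mu_v(\rho_v(Z))=\Delta$ it also yields $\mu_v(U_v)=\Delta_v$. Boundedness follows from $U_v\subseteq\rho_v(Z)$, and $0\in U_v$ since $\mu_v(0)=v\in\Delta_v$.

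Connectedness of $U_v$ is the subtlest point, because the fibers of $\mu_v$ jump in dimension on the coordinate hyperplanes. I would prove path-connectedness directly by showing that the segment $t\mapsto tz$, $t\in[0,1]$, stays in $U_v$ for every $z\in U_v$. The identity
\[
\mu_v(tz)=t^2\,\mu_v(z)+(1-t^2)\,v
\]
follows by pairing both sides with each $X_f$, $f\in F_v$, and using (\ref{muv}) together with $\mu_v(0)=v$: both evaluate to $t^2|z_f|^2/2-\lambda_f$. Thus $\mu_v(tz)$ is a convex combination of $v$ and $\mu_v(z)$, both in $\Delta_v$. Convexity of $\Delta_v$ (it is the intersection of the convex polytope $\Delta$ with the open half-spaces $\{\langle X_{f'},\xi\rangle+\lambda_{f'}>0\}$ for $f'\notin F_v$) then gives $\mu_v(tz)\in\Delta_v$, so $tz\in U_v$ throughout, connecting $0$ to any $z\in U_v$.
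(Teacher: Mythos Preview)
Your proof is correct and complete; in fact it is more explicit than the paper on connectedness. The substantive difference lies in how you establish $U_v=\mu_v^{-1}(\Delta_v)$. The paper proves the two inclusions by hand: for $\mu_v^{-1}(\Delta_v)\subset U_v$ it perturbs $z^v$ and checks that the strict inequalities for $f'\notin F_v$ persist, and for $U_v\subset\mu_v^{-1}(\Delta_v)$ it uses the dilation identity $\mu_v(cz)-v=c^2(\mu_v(z)-v)$ together with the observation that a point $\xi\in f'$ with $f'\notin F_v$ satisfies $v+\tau(\xi-v)\notin\Delta$ for $\tau>1$. You instead abstract this into the single step ``$\mu_v$ is open onto the cone $C$, hence $\op{int}(\mu_v^{-1}(\Delta))=\mu_v^{-1}(\op{int}_C\Delta)$'', and then identify $\op{int}_C\Delta=\Delta_v$. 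This is cleaner and separates the topology from the polytope combinatorics; note however that your identification $\op{int}_C\Delta=\Delta_v$ tacitly uses that each hyperplane $\{\langle X_{f'},\cdot\rangle+\lambda_{f'}=0\}$ with $f'\notin F_v$ is not redundant relative to $C$, which is exactly what the paper's ray-from-$v$ argument verifies. Conversely, the paper does not spell out the connectedness of $U_v$, whereas you do, reusing the same dilation identity (which the paper had introduced for the other purpose) together with the convexity of $\Delta_v$. So the two proofs share the scaling ingredient but deploy it at different points.
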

\begin{proof}
The first statement follows from (\ref{muvformula}), the fact that 
$\rho _v\circ\mu\circ\iota _v$ is a momentum mapping 
for the standard $\R ^{F_v}/\Z ^{F_v}$ action on 
$\C ^{F_v}$, and the fact that a momentum mapping for a 
Hamiltonian action plus a constant is a momentum mapping for the 
same Hamiltonian action. It follows in view of (\ref{muv}) that 
$\langle X_f,\,\mu _v(0)\rangle +\lambda _f=0$ for every 
$f\in F_v$, hence $\mu _v(0)=v$ in view of i) in the definition 
of a Delzant polytope, and the fact that $\{ v\}$ is the intersection 
of all the $f\in F_v$. 
 
It follows from (\ref{muv}), Lemma \ref{Zlem}, that 
$z\in Z$ if and only if 
\begin{equation}
|z_{f}|^2/2=\langle X_{f},\,\mu _v(\rho _v(z))\rangle +\lambda _{f}
\quad\mbox{\rm for every}\quad f\in F, 
\label{Zv}
\end{equation}
where we note that these equations are satisfied by definition for the 
$f\in F_v$. Therefore, if $z\in Z$, then (\ref{Zv}) and 
(\ref{Deltaineq}) imply that $\mu _v(\rho (z))\in\Delta$. 
Conversely, if $\xi\in\Delta$, then it follows from 
Lemma \ref{Zlem} that there exists $z\in Z$ such that 
$\pi ^*(\xi )=\mu (z)-\lambda$, of which the restriction 
to $F_v$ yields $\xi =\mu _v(\rho (z))$. 

If $\xi\in\Delta _v$, $z^v\in\C ^{F_v}$, $\mu _v(z^v)=\xi$, 
then $\langle X_{f'},\,\mu _v(z^v)\rangle +\lambda _{f'}>0$ 
for every $f'\in F\setminus F_v$, which will remain valid 
if we replace $z^v$ by $\widetilde{z}^v$ in a sufficiently small neighborhood 
of $z^v$ in $\C ^{F_v}$.  It follows that we can find $\widetilde{z}\in 
\C ^F$ such that $\rho _v(\widetilde{z})=\widetilde{z}^v$ 
and (\ref{Zv}) holds with $z$ replaced by $\widetilde{z}$. 
That is, $\widetilde{z}\in Z$, and we have proved that 
$z^v\in U_v$. 

Let conversely $z\in U_v\subset\C ^{F_v}$. 
We have in view of (\ref{muv}) that 
\[
|z_f|^2/2=\langle X_f,\,\mu _v(z)-\mu _v(0)\rangle 
=\langle X_f,\,\mu _v(z)-v\rangle
\] 
for every $f\in F_v$. Therefore 
$\mu _v(z)-v$ is multiplied by $c^2$ if we 
replace $z$ by $c\, z$, $c>0$. 
Because $z$ is in the interior of 
$\rho _v(Z)$, we have $c\, z^v\in\rho _v(Z)$, 
hence $\mu _v(c\, z)\in\Delta$ 
for $c>1$, $c$ sufficiently close to $1$.  
On the other hand, if $\xi$ belongs to a 
face of $\Delta$ which is not adjacent to $v$, then 
$v+\tau\, (\xi -v)\notin\Delta$ for any $\tau >1$. 
It follows that $\mu _v(z)$ does not belong to any 
$f'\in F\setminus F_v$, that is, $\mu _v(z)\in\Delta _v$.  
\end{proof}

The equation (\ref{Zv}) can be written in the form 
$|z_f|=r_f(\mu _v(\rho _v(z)))$, where, for each $f\in F$,  
the function $r_f:\Delta\to\R _{\geq 0}$ is defined by 
\begin{equation}
r_f(\xi ):=(2(\langle X_f,\,\xi\rangle +\lambda _f))^{1/2},\quad 
f\in F,\;\xi\in\Delta . 
\label{rv}
\end{equation}
We now view the equations (\ref{Zv}) for $z\in Z$ as equations 
for the coordinates $z_{f'}$, $f'\in F\setminus F_v$, 
with the $z_f$, $f\in F_v$ as parameters, where the latter 
constitute the vector $z^v=\rho _v(z)$. 
If $z^v\in U_v$, then for each $f'\in F\setminus F_v$ the coordinate 
$z_{f'}$ lies on the circle  about the origin 
with {\em strictly positive} radius $r_{f'}(\mu _v(z^v)$. 
Because Lemma \ref{complem} implies that 
the homomorphism which assigns to each element 
of $N$ its projection to $\R ^{F\setminus F_v}/\Z ^{F\setminus F_v}$ 
is an isomorphism, and the latter torus is the group 
of the coordinatewise rotations of the $z_{f'}$, 
$f'\in F\setminus F_v$, this leads to the following conclusions. 
\begin{proposition}
Let $v$ be a vertex of $\Delta$. 
The open subset $Z_v:={\rho _v}^{-1}(U_v)\cap Z$ of $Z$ is 
a connected smooth submanfold of $\C ^F$ of real dimension 
$2n+(d-n)$, where $d=\#(F)$ and $d-n=\op{dim}N$. 
The action of the torus $N$ on $Z_v$ is free, and the 
projection $\rho _v:Z_v\to U_v$ exhibits $Z_v$ as a 
principal $N$\--bundle over $U_v$. It follows that 
we have a reduced phase space $M_v:=Z_v/N$, 
which is a connected smooth symplectic $2n$\--dimensional manifold, which 
carries an effective Hamiltonian $T$\--action with momentum 
mapping as in (\ref{muT}), with $Z$ replaced by $Z_v$. 

There is a unique global section $s_v:U_v\to Z_v$ of 
$\rho _v:Z_v\to U_v$ such that 
$s_v(z)_{f'}\in\R _{>0}$ for every $z\in U_v$ and $f'\in F\setminus F_v$. 
Actually, $s_v(z)_{f'}=r_{f'}(\mu _v(z))$ when $z\in U_v$ and 
$f'\in F\setminus F_v$, 
and therefore the section $s_v$ is smooth.  
If $p_v:Z_v\to M_v=Z_v/N$ 
denotes the canonical projection, then 
$\psi _v:=p_v\circ s_v$ is a $T$\--equivariant symplectomorphism 
from $U_v$ onto $M_v$, where $T$ acts on 
$U_v$ via  $\R ^{F_v}/\Z ^{F_v}$, as in Lemma \ref{Zvlem}. 
\label{fibvprop}
\end{proposition}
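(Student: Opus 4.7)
The plan is to use equation (\ref{Zv}) together with the strict positivity of $r_{f'}(\mu _v(z^v))$ for $z^v\in U_v$, $f'\in F\setminus F_v$ (which follows from $\mu _v(U_v)=\Delta _v$ in Lemma \ref{Zvlem}), to give an explicit global parametrization
\[
\Psi :U_v\times (\R /\Z )^{F\setminus F_v}\to Z_v,
\]
sending $(z^v,\theta )$ to the point of $\C ^F$ whose $f$\--coordinate is $z^v_f$ for $f\in F_v$ and whose $f'$\--coordinate is $r_{f'}(\mu _v(z^v))\,\op{e}^{2\pi\scriptop{i}\,\theta _{f'}}$ for $f'\in F\setminus F_v$. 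Since $r_{f'}>0$ on $U_v$, the map $\Psi$ is a smooth diffeomorphism onto its image (its inverse recovers $z^v=\rho _v(z)$ and reads off each $\theta _{f'}$ from the argument of $z_{f'}$), and (\ref{Zv}) shows that this image is exactly $Z_v$. Hence $Z_v$ is a connected smooth submanifold of $\C ^F$ of real dimension $2n+(d-n)$.

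Next I would identify the $N$\--action on $Z_v$ through $\Psi$. By Lemma \ref{complem}, restriction of functions from $F$ to $F\setminus F_v$ gives an isomorphism from $N$ onto the torus $\R ^{F\setminus F_v}/\Z ^{F\setminus F_v}$. Under $\Psi$ the $N$\--action becomes standard translation on the second factor, which is free and has as orbits the fibers of $\rho _v$. Consequently $\rho _v:Z_v\to U_v$ is a trivial principal $N$\--bundle, $M_v=Z_v/N$ is a connected smooth symplectic $2n$\--manifold, and the Hamiltonian $T$\--action and momentum mapping on $M_v$ are obtained by restriction of those already established on $M$.

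For the section I would set $s_v(z^v):=\Psi (z^v,0)$, i.e.~$s_v(z^v)_f=z^v_f$ for $f\in F_v$ and $s_v(z^v)_{f'}=r_{f'}(\mu _v(z^v))$ for $f'\in F\setminus F_v$. This is a smooth section of $\rho _v$ because $r_{f'}>0$ on $U_v$; uniqueness among sections with values in $\R _{>0}$ on $F\setminus F_v$ is immediate from (\ref{Zv}), which forces $|s(z^v)_{f'}|=r_{f'}(\mu _v(z^v))$. Since $s_v$ picks one representative in each $N$\--orbit, $\psi _v=p_v\circ s_v$ is a diffeomorphism from $U_v$ onto $M_v$. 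For $T$\--equivariance, I would identify $T=(\R ^F/\Z ^F)/N$ with $\iota _v(\R ^{F_v}/\Z ^{F_v})$ via Lemma \ref{complem}, so that the action of $t\in T$ on $M_v$ is represented by $\iota _v(\tilde t)\in \R ^F/\Z ^F$ with $\tilde t=(\pi\circ\iota _v)^{-1}(t)$; this lift rotates only the coordinates $z_f$ with $f\in F_v$, leaving the $z_{f'}$ fixed. Because $\mu _v$ depends only on the moduli $|z_f|$, it is $T$\--invariant, hence $\iota _v(\tilde t)\cdot s_v(z^v)=s_v(\tilde t\cdot z^v)$, which gives the equivariance; effectiveness on $M_v$ follows because the $\R ^{F_v}/\Z ^{F_v}$\--action on $U_v$ is faithful. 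Finally, $p_v^*\sigma _M={\iota _{Z_v}}^*\sigma$ implies $\psi _v^*\sigma _M=s_v^*\sigma$, and because $s_v(z^v)_{f'}\in \R$ for $f'\in F\setminus F_v$, the differentials $\op{d}\! y_{f'}$ pull back to zero, so only the $f\in F_v$ terms survive in $s_v^*\sigma$ and produce exactly (\ref{sigmav}).

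The main obstacle is the clean bookkeeping in the $T$\--equivariance and symplectomorphism steps: one must match up the two a priori distinct $T$\--actions (on $U_v$ via $\pi\circ\iota _v$, and on $M_v$ via the quotient $(\R ^F/\Z ^F)/N$) through Lemma \ref{complem}, and keep careful track of which coordinates of $z\in Z_v$ are moved by $T$ versus by $N$. Everything else reduces to routine manipulations with the parametrization $\Psi$ and a one\--line pullback computation.
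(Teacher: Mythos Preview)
Your proposal is correct and follows essentially the same approach as the paper: the paper, in the paragraph immediately preceding the proposition (and without a formal proof environment), observes that for $z^v\in U_v$ each coordinate $z_{f'}$, $f'\in F\setminus F_v$, lies on a circle of strictly positive radius $r_{f'}(\mu _v(z^v))$, and that Lemma \ref{complem} identifies $N$ with the torus $\R ^{F\setminus F_v}/\Z ^{F\setminus F_v}$ acting by coordinatewise rotation on those $z_{f'}$. Your explicit parametrization $\Psi$, the section $s_v=\Psi(\cdot ,0)$, the $T$\--equivariance bookkeeping via $\iota _v$, and the one\--line pullback $s_v^*\sigma =\sigma _v$ (using $y_{f'}\circ s_v=0$) simply spell out in detail what the paper leaves to the reader.
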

\begin{remark}
When $z$ belongs to the closure $\rho _v(Z)=\overline{U_v}$ of $U_v$ 
in $\C ^{F_v}$, see Lemma \ref{Zvlem}, we can define $s_v(z)\in\C ^F$ 
by $s_v(z)_f=z_f$ when $f\in F_v$ and 
$s_v(z)_{f'}=r_{f'}(\mu _v(z))$ when $f'\in F\setminus F_v$. 
This defines a continuous extension $s_v:\overline{U_v}\to\C ^F$ 
of the mapping $s_v:U_v\to Z$. Therefore 
$s_v(\overline{U_v})\subset Z$, and $\psi _v :=p\circ s_v:
\overline{U_v}\to M$ is a continuous extension of the 
diffeomorphism $\psi _v:U_v\to M_v$. 

The continuous mapping $\psi _v:\overline{U_v}\to M$ is 
surjective, but the restriction of it to the boundary 
$\partial U_v:=\overline{U_v}\setminus U_v$ of $U_v$ in 
$\C ^{F_v}$ is not injective. If $z^v\in\partial U_v$, 
then the set $G$ of all $f'\in F\setminus F_v$ such that 
$s_v(z^v)_{f'}=0$, or equivalently 
$\mu _T(\psi _v(z^v))\in f'$,  is not empty. 
The fiber of $\psi _v$ over $\psi _v(z^v)$ is equal to 
the set of all $t^v\cdot z^v$, where the $t^v\in\R ^{F_v}/\Z ^{F_v}$ 
are of the form 
\[
t^v_f=\,-\sum_{g\in G}\, (v)^f_g\, t_g,\quad f\in F_v,
\]
where $t_g\in\R /\Z$. It follows that each fiber is an 
orbit of some subtorus of $\R ^{F_v}/\Z ^{F_v}$ acting on $\C ^{F_v}$.  
\label{Uvclosurerem}
\end{remark}

Recall the definition (\ref{Deltav}) of the open 
subset $\Delta _v$ of the Delzant polytope $\Delta$. 
Because the union over all vertices $v$ of the 
$\Delta _v$ is equal to $\Delta$, we have the following corollary.  
\begin{corollary}
The sets $Z_v$, $v\in V$, form a covering of $Z$. As a consequence, 
$Z$ is a smooth submanifold of $\C ^F$ of real dimension $n+d$. 
The action of the torus $N$ on $Z$ is free, and 
we have a reduced phase space $M:=Z/N$, 
which is a compact and connected 
smooth $2n$\--dimensional symplectic manifold, which 
carries an effective Hamiltonian $T$\--action with momentum 
mapping $\mu _T:M\to T$ as in (\ref{muT}). The sets $M_v$, $v\in V$, 
form an open covering of $M$ and the 
$\varphi _v:=(\psi _v)^{-1}:M_v\to U_v$ form an atlas 
of $T$\--equivariant symplectic coordinatizations 
of the Hamiltonian $T$\--space $M$. 
For each $v\in V$, we have $M_v={\mu _T}^{-1}(\Delta _v)$, 
and $\mu _T|_{M_v}=\mu _v\circ\varphi _v$. 
\label{fibvcor}
\end{corollary}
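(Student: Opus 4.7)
The plan is to transport vertex-local information supplied by Lemma \ref{Zvlem} and Proposition \ref{fibvprop} to the global statement. The first step is to verify the covering $Z=\bigcup_{v\in V}Z_v$. For $z\in Z$, Lemma \ref{Zlem} gives a unique $\xi\in\got{t}^*$ with $\mu(z)-\lambda=\pi^*(\xi)$, and $\xi\in\Delta$; choose a vertex $v$ with $\xi\in\Delta _v$, using $\Delta=\bigcup _{v\in V}\Delta _v$. Reading (\ref{xiz}) on $F_v$ identifies $\xi$ with $\mu _v(\rho _v(z))$, so $\rho _v(z)\in U_v$ by Lemma \ref{Zvlem} and $z\in Z_v$. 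Since each $Z_v$ is an $N$-invariant smooth submanifold of $\C^F$ of real dimension $n+d$ on which $N$ acts freely (Proposition \ref{fibvprop}), the global $Z$ inherits the same structure, and compactness of $Z$ is Lemma \ref{Zlem}. The quotient $M=Z/N$ is then a smooth compact $2n$-dimensional manifold with open cover $\{M_v=Z_v/N\}_{v\in V}$, and the $T$-equivariant symplectomorphisms $\psi _v:U_v\to M_v$ from Proposition \ref{fibvprop} yield the atlas $\{\varphi _v\}$.

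Connectedness of $M$ I would prove via the edge graph of $\Delta$. Each $M_v$ is connected because $U_v$ is (Lemma \ref{Zvlem}). For adjacent vertices $v,v'$ joined by an edge $e$ of $\Delta$, any $\xi$ in the relative interior of $e$ lies in $\Delta _v\cap\Delta _{v'}$; choose $z^v\in U_v$ with $\mu _v(z^v)=\xi$ by Lemma \ref{Zvlem}. Then $s_v(z^v)\in Z$ lifts $\xi$, and the uniqueness in Lemma \ref{Zlem} forces $\mu _{v'}(\rho _{v'}(s_v(z^v)))=\xi\in\Delta _{v'}$, so $s_v(z^v)\in Z_{v'}$ by Lemma \ref{Zvlem}, and $\psi _v(z^v)\in M_v\cap M_{v'}$. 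Since the $1$-skeleton of the Delzant polytope is connected, $M$ is connected. For effectiveness of the $T$-action, a $t\in T$ fixing $M$ fixes $M_v$ pointwise, so by $T$-equivariance of $\psi _v$ and Lemma \ref{complem} it acts trivially on the open set $U_v\subset\C^{F_v}$, forcing $t=0$.

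The last step is the formula $\mu _T|_{M_v}=\mu _v\circ\varphi _v$ together with the identification $M_v=\mu _T^{-1}(\Delta _v)$. The formula follows from (\ref{muT}) with $Z_v$ in place of $Z$ combined with (\ref{muv}): for $z^v\in U_v$ the vector $s_v(z^v)\in Z_v$ satisfies $(\mu-\lambda)(s_v(z^v))=\pi^*(\mu _v(z^v))$ by construction of $s_v$ and $\mu _v$, so $\mu _T(\psi _v(z^v))=\mu _v(z^v)$. Together with $\mu _v(U_v)=\Delta _v$ this gives $M_v\subset\mu _T^{-1}(\Delta _v)$. For the reverse inclusion, any $m\in\mu _T^{-1}(\Delta _v)$ has a lift $z\in Z$; the strict inequalities $\langle X_{f'},\mu _T(m)\rangle +\lambda _{f'}>0$, $f'\in F\setminus F_v$, defining $\Delta _v$, combined with (\ref{Zv}), ensure $\rho _v(z)\in U_v$ by Lemma \ref{Zvlem}, so $z\in Z_v$ and $m\in M_v$.

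The main obstacle is bookkeeping: each step requires translating between the strict inequalities that cut out $\Delta _v$, the positivity of the coordinates $z_{f'}$ with $f'\in F\setminus F_v$ in (\ref{Zv}), and the uniqueness part of Lemma \ref{Zlem} ensuring that the vertex-local momentum maps $\mu _v$ and $\mu _{v'}$ glue to one globally defined $\mu _T$. Once this translation is internalized, every assertion reduces to one of Lemma \ref{Zlem}, Lemma \ref{Zvlem}, or Proposition \ref{fibvprop}.
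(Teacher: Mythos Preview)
Your proof is correct and follows the paper's approach: the paper merely observes that $\bigcup_{v\in V}\Delta _v=\Delta$ and leaves everything else implicit in Lemma~\ref{Zlem}, Lemma~\ref{Zvlem}, and Proposition~\ref{fibvprop}, which is precisely the reduction you carry out in detail. Your connectedness argument via the edge graph is valid but heavier than necessary, since every $\Delta _v$ already contains the interior of $\Delta$, so all the $M_v$ share a common nonempty open set.
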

For a characterization of $M_v$ in terms of the orbit type 
stratification in $M$ for the $T$\--action, see Corollary 
\ref{stratacor}, which also implies that $M_v$ is an open cell in $M$.  
\begin{corollary}
For every $f\in F$ the set ${\mu _T}^{-1}(f)$ is a real codimension
two smooth compact connected smooth symplectic submanifold of $M$. 

For each $v\in V$, the set $M_v$ is dense in $M$, and the 
diffeomorphism $\psi _v:U_v\to M_v$ is maximal among all 
diffeomorphisms from open subsets of $\C ^{F_v}$ onto 
open subsets of $M$.
\label{fibvcorcor} 
\end{corollary}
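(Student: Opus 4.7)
The plan is to handle the two statements sequentially, in both cases reducing to the coordinate picture via $\varphi_v : M_v \to U_v$, the formula (\ref{muv}), and the standard Kähler form (\ref{sigmav}) on $\C^{F_v}$.

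For the first statement, fix $f \in F$ and pick any vertex $v$ of $f$ (so $f \in F_v$). By (\ref{muv}), $\langle X_f, \mu_v(z)\rangle + \lambda_f = |z_f|^2/2$, so $\varphi_v$ identifies $\mu_T^{-1}(f) \cap M_v$ with the complex hyperplane slice $\{z \in U_v : z_f = 0\}$; the restriction of (\ref{sigmav}) is a standard Kähler form on $\C^{F_v \setminus \{f\}}$, hence nondegenerate. I would then check that the charts $M_v$ with $v$ a vertex of $f$ cover $\mu_T^{-1}(f)$: for any $\xi \in f$, the minimal face of $\Delta$ containing $\xi$ lies inside $f$, and any of its vertices $v$ satisfies $\xi \in \Delta_v$ and $v \in f$. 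This gives the codimension-two symplectic submanifold structure; compactness is immediate from closedness in the compact manifold $M$. For connectedness, I would note that $U_v$ is star-shaped at $0$ (since $\Delta_v$ is star-shaped at $v$ by a convex-combination argument, and $\mu_v(tz) = (1-t^2)v + t^2\mu_v(z)$), so each slice $\{z_f = 0\} \cap U_v$ is connected; moreover any two such slices, for distinct vertices $v, v'$ of $f$, overlap on $\mu_T^{-1}(\mathrm{relint}(f))$, which is nonempty and common to both $M_v$ and $M_{v'}$ — yielding a connected cover.

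For the second statement, density of $M_v$ is immediate from the first: $M \setminus M_v = \bigcup_{f' \in F \setminus F_v} \mu_T^{-1}(f')$ is a finite union of closed codimension-two submanifolds, hence closed with empty interior. To prove maximality, I would argue by contradiction: suppose $\tilde\psi: \tilde U \to \tilde M$ is a diffeomorphism extending $\psi_v$ with $\tilde U$ open in $\C^{F_v}$ and strictly larger than $U_v$, and pick $z_0 \in \tilde U \setminus U_v$. The core step will be producing a nonempty open set $V \subset \tilde U \setminus \overline{U_v}$. If $z_0 \notin \overline{U_v}$ this is immediate by openness of $\tilde U$; otherwise $z_0 \in \partial U_v = \mu_v^{-1}\bigl(\bigcup_{f' \in F \setminus F_v} f'\bigr)$, and I would pick $f' \notin F_v$ with $\mu_v(z_0) \in f'$, set $A := \langle X_{f'}, v\rangle + \lambda_{f'}$ (strictly positive since $v \notin f'$), and exploit the quadratic homogeneity $\mu_v(cz) - v = c^2(\mu_v(z) - v)$ to obtain
\[
\langle X_{f'}, \mu_v((1+\epsilon)z_0)\rangle + \lambda_{f'} = A\bigl(1 - (1+\epsilon)^2\bigr) < 0
\]
for every $\epsilon > 0$; thus $(1+\epsilon)z_0$ lies in $\tilde U \setminus \overline{U_v}$ for small $\epsilon$, and a neighborhood of it in $\tilde U$ gives $V$. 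Then $\tilde\psi(V)$ is open in $M$, disjoint from $M_v = \tilde\psi(U_v)$, so lies in $M \setminus M_v$, contradicting the density just established.

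The hardest step will be the connectedness of $\mu_T^{-1}(f)$: one wants to avoid invoking Atiyah–Guillemin–Sternberg or a heavy topological lemma, so the star-shape of $U_v$ at $0$ together with the common overlap $\mu_T^{-1}(\mathrm{relint}(f))$ needs to be argued explicitly and verified against (\ref{muv}). Once that is in place, everything else in the proof falls out of (\ref{muv}), (\ref{sigmav}), and the radial scaling identity.
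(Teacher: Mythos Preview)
Your proof is correct and follows the same route as the paper: identify $\mu_T^{-1}(f)\cap M_v$ with the coordinate hyperplane $\{z_f=0\}$ via (\ref{muv}), then derive density and maximality of $M_v$ from the fact that $M\setminus M_v$ is a finite union of codimension\--two submanifolds. The paper's own argument is much terser---it does not spell out connectedness and dispatches maximality in one clause---so your star\--shape argument for connectedness and your radial scaling argument producing an open set in $\tilde U\setminus\overline{U_v}$ are welcome elaborations of steps the paper leaves implicit.
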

\begin{proof}
If $f\in F$, then for each $v\in V$ we have that 
\begin{equation}
{\mu _v}^{-1}(f)=\{ z\in U_v\mid z_f=0\}  
\label{mu-1f}
\end{equation}
if $v\in f$, that is, $f\in\Delta _v$. This follows from 
(\ref{muv}) and i) in the description of $\Delta$ in the 
beginning of Section \ref{delzantsec}. On the other hand, 
${\mu _v}^{-1}(f)=\emptyset$ if $f\notin\Delta _v$. 
Because ${\mu _T}^{-1}(f)\cap M_v=\psi _v({\mu _v}^{-1}(f))$,  
and the $M_v$, $v\in V$, form an open covering of $M$, 
this proves the first statement. The second statement 
follows from the first one, because the complement of $M_v$ 
in $M$ is equal to the union of the sets ${\mu _T}^{-1}(f')$ 
with $f'\in F\setminus F_v$.   
\end{proof}
\begin{remark}
It follows from the proof of Corollary 
\ref{fibvcorcor}, that ${\mu _T}^{-1}(f)$ 
is a connected component of the fixed point set in $M$ of the 
of the circle subgroup $\op{exp}(\R\, X_f)$ of $T$. 

Actually,  ${\mu _T}^{-1}(f)$ is 
a Delzant space for the action of the $(n-1)$\--dimensional torus
\[
T/\op{exp}(\R\, X_f), 
\]
with Delzant polytope $P\subset ({\got t}/(\R\, X_f))^*$ 
such that the image of $P$ in ${\got t}^*$ under the 
embedding  $({\got t}/(\R\, X_f))^*\to {\got t}^*$ is equal to 
a translate of $f$. 

In a similar way, if 
$g$ is a $k$\--dimensional face of $\Delta$, 
then ${\mu _T}^{-1}(g)$ 
is a $2k$\--dimensional Delzant space for the quotient of $T$ by the 
subtorus of $T$ which acts trivially on ${\mu _T}^{-1}(g)$. 
\label{fibvcorcorrem}
\end{remark}
\begin{remark}
Let, for each $f\in F$, $c_f\in\op{H}^2(M,\,\Z )\subset\op{H}^2(M,\,\R )$ 
denote the Poincar\'e dual of the codimension two 
Delzant subspace ${\mu _T}^{-1}(f)$ of $M$, see 
Remark \ref{fibvcorcorrem}. Then, with 
our normalization  of the symplectic form (\ref{sigma}), 
the de Rham cohomology class $[\sigma _M]$ of the 
symplectic form $\sigma _M$ of the Delzant space is 
equal to 
\begin{equation}
[\sigma _M]=\sum_{f\in F}\,\lambda _f\, c_f, 
\label{cohom}
\end{equation}
see Guillemin \cite[Thm. 6.3]{gpaper}. In particular 
$[\sigma _M]\in\op{H}^2(M,\,\Z )$, and therefore 
$[\sigma _M]$ is equal to the Chern class of a complex 
line bundle over $M$, if all the coefficients 
$\lambda _f$, $f\in F$, are integers. 

If $\Delta$ is a simplex, when $M$ is isomorphic to the 
$n$\--dimensional complex projective space, then the 
${\mu _T}^{-1}(f)$, $f\in F$, are complex projective hyperplanes, see 
Subsection \ref{projsubsec}, which are all homologous to each other. 
It follows that in this case $[\sigma _M]=\gamma\, c$, where 
$c$ is the Poincar\'e dual of a complex projective hyperplane and 
$\gamma$ is equal to the sum of all the coefficients 
$\lambda _f$, $f\in F$. 
\label{cohomrem}
\end{remark}
\begin{remark}
Let $\iota :T\to \R ^n/\Z ^n$ be an isomorphism of tori, 
which allows us to let $t\in T$ act on $\C ^n$ via 
$\R ^n /\Z ^n$ by means of 
\[
(t\cdot z)_j=\op{e}^{2\pi\scriptop{i}\iota (t)_j}\, z_j,\quad 
1\leq j\leq n.
\]
Let $U$ be a connected $T$\--invariant open neighborhood 
of $0$ in $\C ^n$, provided with the symplectic form (\ref{sigma}) with 
$F$ replaced by $\{ 1,\,\ldots ,\, n\}$. Let 
$\psi :U\to M$ be a $T$\--equivariant symplectomorphism 
from $U$ onto an open subset $\psi (U)$ of $M$. 
Because $0$ is the unique fixed point for the $T$\--action 
in $U$, and the fixed points for the $T$\--action in $M$ are 
the pre\--images under $\mu _T$ of the vertices 
of $\Delta$, there is a unique $v\in V$ such that 
$\mu _T(\psi (0))=v$. Let $I_v:\C ^{F_v}\to\C ^n$ denote the 
complex linear extension of the tangent map of the 
torus isomorphism $\iota\circ (\pi\circ\iota _v)$. 
In terms of the notation of 
Lemma \ref{Zvlem} and Proposition \ref{fibvprop}, 
we have that $U\subset I_v(U_v)$ 
and $\psi _v=\psi\circ I_v$ on ${I_v}^{-1}(U)$, which leads to 
an identification of $\psi$ with the restriction of 
$\psi _v$ to the connected open subset ${I_v}^{-1}(U)$ 
of $U_v$, via the isomorphism ${I_v}^{-1}$. 

The $\psi$'s, with $U$ equal to a ball in $\C ^n$ 
centered at the origin, are the equivariant symplectic ball 
embeddings in Pelayo \cite{pelayo}, and the  
second statement in Corollary \ref{fibvcorcor} shows that 
the diffeomorphisms $\psi _v$ are the maximal extensions 
of these equivariant symplectic ball embeddings. 
\label{pelayorem}
\end{remark} 
\section{The coordinate transformations}
\label{coordtransfsec}
Let $v,\, w\in V$. Then 
\begin{eqnarray}
U_{v,\, w}&:=&\varphi _v(M_v\cap M_w)=U_v\cap    
{ \psi _v}^{-1}\circ\psi _w(U_w)
\nonumber\\
&=&\{ z^v\in U_v\mid (z^v)_f\neq 0
\quad\mbox{\rm for every}\quad f\in F_v\setminus F_w\} .  
\label{Uvw}
\end{eqnarray}
In this section we will give an explicit formula for the 
coordinate transformations 
\[
\varphi _w\circ {\varphi _v}^{-1}={\psi _w}^{-1}\circ\psi _v: 
U_{v,\, w}\to U_{w,\, v}, 
\]
which then leads to a description 
of the Delzant space $M$ as obtained by gluing together 
the subsets $U_v$ with the coordinate transformations as 
the gluing maps.  

Let $f\in F$. Because the $X_g$, $g\in F_w$, form a $\Z$\--basis 
of ${\got t}_{\Z}$, and $X_f\in {\got t}_{\Z}$, there exist unique integers 
$(w)_f^g$, $g\in F_w$,  such that 
\begin{equation}
X_f=\sum_{g\in F_w}\, (w)_f^g\, X_g.  
\label{mfg}
\end{equation}
Note that if $f\in F_w$, then $(w)_f^g=1$ when $g=f$ and 
$(w)_f^g=0$ otherwise. For the following lemma recall
that $r_g$ is defined by expression (\ref{rv}).
\begin{lemma}
Let $v,\, w\in V$, $z^v\in U_{v,\, w}$. Then  
$z^w:=\varphi _w\circ {\varphi _v}^{-1}(z^v)\in U_w\subset\C ^{F_w}$ 
is given by 
\begin{equation}
z^w_g=\prod_{f\in F_v}(z^v_f)^{(w)_f^g}\;
/\prod_{f\in F_v\setminus F_w}\, |z^v_f|^{(w)_f^g}
\label{inFv}
\end{equation}
if $g\in F_w\cap F_v$, and 
\begin{equation}
z^w_g=\prod_{f\in F_v}(z^v_f)^{(w)_f^g}\; 
r_g(\mu _v(z^v))
/\prod_{f\in F_v\setminus F_w}\, |z^v_f|^{(w)_f^g}
\label{notinFv}
\end{equation}
if $g\in F_w\setminus F_v$. 
\label{coordtranslem}
\end{lemma}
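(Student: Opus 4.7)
The plan is to unwind both sides of the identity $\psi_w(z^w) = \psi_v(z^v)$ and to exhibit the unique element of $N$ that conjugates $s_v(z^v)$ into $s_w(z^w)$. By the construction of $\psi_v = p_v\circ s_v$ and $\varphi_w = {\psi_w}^{-1}$, the point $z^w\in U_w$ is characterized by the requirement that there exists $n\in N$ such that $n\cdot s_v(z^v) = s_w(z^w)$; equivalently, after writing $n$ as the class of some $t\in \got{n}\subset\R^F$, we must have $e^{2\pi\op{i}t_{f'}}s_v(z^v)_{f'}\in\R_{\geq 0}$ for every $f'\in F\setminus F_w$, and then we read off $z^w_g = e^{2\pi\op{i}t_g}s_v(z^v)_g$ for $g\in F_w$.

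First I would use Lemma \ref{complem}: the decomposition $\R^F = \got{n}\oplus\iota_w(\R^{F_w})$ says that a vector $t\in\R^F$ lies in $\got{n}$ iff its $F_w$\--components are determined by its complementary components. Substituting (\ref{mfg}) into the equation $\pi(t)=0$ produces the explicit relation
\[
t_g = -\sum_{f\in F\setminus F_w}(w)_f^g\, t_f \qquad (g\in F_w),
\]
so $t\in\got{n}$ is uniquely determined once $t|_{F\setminus F_w}$ is chosen.

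Next I would pin down $t_{f'}$ for $f'\in F\setminus F_w$ from the requirement that rotating $s_v(z^v)_{f'}$ by $e^{2\pi\op{i}t_{f'}}$ lands on $\R_{\geq 0}$. Split $F\setminus F_w$ into $F_v\setminus F_w$ and $F\setminus(F_v\cup F_w)$. On the second piece, $s_v(z^v)_{f'} = r_{f'}(\mu_v(z^v))$ is strictly positive because $\mu_v(z^v)\in\Delta_v$ (Lemma \ref{Zvlem}), and we simply take $t_{f'}=0$. On the first piece, $s_v(z^v)_{f'} = z^v_{f'}$ is nonzero by the description of $U_{v,w}$ in (\ref{Uvw}), so the rotation that produces a positive real number is $e^{-2\pi\op{i}t_{f'}} = z^v_{f'}/|z^v_{f'}|$.

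Plugging these phases into the formula for $t_g$ and then forming $z^w_g = e^{2\pi\op{i}t_g}\, s_v(z^v)_g$ gives the claimed expressions directly: the factor $e^{2\pi\op{i}t_g}$ becomes $\prod_{f\in F_v\setminus F_w}(z^v_f/|z^v_f|)^{(w)_f^g}$, while $s_v(z^v)_g$ equals $z^v_g$ in case $g\in F_v\cap F_w$ and $r_g(\mu_v(z^v))$ in case $g\in F_w\setminus F_v$. To rewrite the product with index set $F_v$ rather than $F_v\setminus F_w$ (thereby matching (\ref{inFv}) and (\ref{notinFv}) literally) I would invoke the observation recorded after (\ref{mfg}) that $(w)_f^g = \delta_{f,g}$ for $f\in F_v\cap F_w\subset F_w$: every additional factor is either $z^v_g$ itself (when $g\in F_v$) or a trivial $(z^v_f)^0$. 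The main obstacle is purely bookkeeping, namely keeping the three subsets $F_v\cap F_w$, $F_v\setminus F_w$, $F\setminus(F_v\cup F_w)$ straight, and verifying that the resulting $z^w$ indeed lies in $U_w$; the latter is automatic because $\psi_v(z^v)\in M_v\cap M_w$ by hypothesis, so $\varphi_w$ is defined there.
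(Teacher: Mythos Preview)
Your proposal is correct and follows essentially the same route as the paper's own proof: characterize $z^w$ by the condition that $s_w(z^w)$ lies in the $N$\--orbit of $s_v(z^v)$, parametrize $\got{n}$ via the relation $t_g=-\sum_{f\in F\setminus F_w}(w)_f^g\,t_f$, determine the phases $t_{f'}$ on $F\setminus F_w$ from the positivity of $s_w(z^w)_{f'}$ (splitting into $F_v\setminus F_w$ and $F\setminus(F_v\cup F_w)$ exactly as you do), and then read off $z^w_g=e^{2\pi\op{i}t_g}s_v(z^v)_g$ using $(w)_f^g=\delta_{f,g}$ for $f\in F_w$ to extend the product over all of $F_v$. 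The only cosmetic difference is that the paper writes the torus element as $e^{\op{i}t_f}$ rather than $e^{2\pi\op{i}t_f}$ and records the four cases for $s_v(z^v)_f,\,s_w(z^w)_f$ in a small table, but the logic is identical.
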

\begin{proof}
The element $z^w\in U_w$ is determined by the condition that  
$s_w(z^w)$ belongs to the $N$\--orbit of $s_v(z^v)$. That is, 
\[
s_w(z^w)_f=\op{e}^{\fop{i}t_f}\, s_v(z^v)_f\quad
\mbox{\rm for every}\quad f\in F
\] 
for some $t\in\R ^F$ such that  
\begin{equation}
\sum_{f\in F}\, t_f\, X_f=0.
\label{tn}
\end{equation}
It follows from (\ref{tn}), (\ref{mfg}) and the linear independence 
of the $X_g$, $g\in F_w$, that $t\in {\got n}$ if and only if 
\begin{equation}
t_g=\, -\sum_{f\in F\setminus F_w}\, (w)_f^g\, t_f\quad 
\mbox{\rm for every}\quad g\in F_w.
\label{tg}
\end{equation}

Note that $\mu _v(z^v)=\mu _T(m)=\mu _w(z^w)$, where 
$m=\psi _v(z^v)=\psi_w(z^w)$. 
It follows from the definition of the sections $s_v$ and $s_w$, 
see Proposition \ref{fibvprop}, that 
\begin{itemize}
\item[i)] 
$s_v(z^v)_f=z^v_f$ and $s_w(z^w)_f=z^w_f$ if $f\in F_v\cap F_w$,
\item[ii)] 
$s_v(z^v)_f=z^v_f$ and $s_w(z^w)_f=r_f(\mu _w(z^w))=r_f(\mu _v(z^v))$ 
if $f\in F_v\setminus F_w$, 
\item[iii)]
$s_v(z^v)_f=r_f(\mu _v(z^v))$ and $s_w(z^w)_f=z^w_f$ 
if $f\in F_w\setminus F_v$, and 
\item[iv)] 
$s_v(z^v)_f=r_f(\mu _v(z^v))=r_f(\mu _w(z^w))=s_w(z^w)_f$ 
if $f\in F\setminus (F_v\cup F_w)$. 
\end{itemize}
It follows from ii) and iv) that $t_f=\, -\op{arg}z^v_f$ 
and $t_f=0$ modulo $2\pi$ if 
$f\in F_v\setminus F_w$ and $f\in F\setminus 
(F_v\cup F_w)$, respectively. Then (\ref{tg}) implies that, 
modulo $2\pi$, 
\[
t_g=\sum_{f\in F_v\setminus F_w}\, (w)_f^g\, \op{arg}z^v_f
\quad\mbox{\rm for every}\quad g\in F_w.
\]

It now follows from i) and iii) that if $g\in F_w$, then 
$z^w_g=s_w(z^w)_g=\op{e}^{\fop{i}\, t_g}\, s_v(z^v)_g$ 
is equal to 
\[
\op{e}^{\fop{i}\, t_g}\, z^v_g
=\prod_{f\in F_v}(z^v_f)^{(w)_f^g}\;
/\prod_{f\in F_v\setminus F_w}\, |z^v_f|^{(w)_f^g}
\]
if $g\in F_v$, and equal to 
\[
\op{e}^{\fop{i}\, t_g}\, |z^v_g|
=\prod_{f\in F_v}(z^v_f)^{(w)_f^g}\;
|z^v_g|/\prod_{f\in F_v\setminus F_w}\, |z^v_f|^{(w)_f^g}
\]
if $g\notin F_v$, respectively. 
Here we have used that if $g\in F_w$, then $(w)_f^g=1$ if 
$f=g$ and  $(w)_f^g=0$ if $f\in F_w$, $f\neq g$. 
Because $|z^v_g|=r_g(\mu _v(z^v))$ if $g\notin F_v$, see 
(\ref{Zv}) and (\ref{rv}), this completes the proof of the lemma. 
\end{proof}
\begin{remark}
Note that $z^v\in U_{v,\, w}$ means that 
$z^v\in U_v$ and $z^v_f\neq 0$ if $f\in F_v\setminus F_w$. 
Furthermore, $z^v\in U_v$ implies that if $g\notin F_v$, then 
$\mu _v(z^v)\notin g$,  and therefore $r_g$ is smooth 
on a neighborhood of $\mu _v(z^v)$. Finally, note that 
if $g\in F_w$ and $f\in F_v\cap F_w$, then 
$(w)_f^g\in \{ 0,\, 1\}$, and therefore each of the factors 
in the right hand sides of (\ref{inFv}) and (\ref{notinFv}) 
is smooth on $U_{v,\, w}$. 
\label{smoothrem}
\end{remark}
\begin{remark}
In (\ref{inFv}) and (\ref{notinFv}) only the integers 
$(w)_f^g$ appear with $f\in F_v$ and $g\in F_w$. 
Let $(w\, v)$ denote the matrix $(w)_f^g$, 
where $f\in F_v$ and $g\in F_w$. Then $(w\, v)$ 
is invertible, with inverse equal to 
the integral matrix $(v\, w)$. 
These integral matrices also satisfy the 
cocycle condition that $(w\, v)\, (v\, u)=(w\, u)$, 
if $u,\, v,\, w\in V$. These properties follow 
from the fact that (\ref{mfg}) shows that $(w\, v)$ 
is the matrix which maps the 
$\Z$\--basis $X_g$, $g\in F_w$, onto the $\Z$\--basis 
$X_f$, $f\in F_v$, of ${\got t}_{\Z}$. It is no surprise that 
these {\em base changes} enter in the formulas which relate the models
in the vector spaces $\C ^{F_v}$ for the different choices of $v\in V$. 
\label{(w)rem}
\end{remark}
\begin{corollary}
Let, for each $v\in V$, the mapping $\mu _v:\C ^{F_v}\to {\got t}^*$ 
be defined by  (\ref{muv}), which is a momentum mapping 
for a Hamiltonian $T$\--action via $\R ^{F_v}/\Z ^{F_v}$ 
on the symplectic vector space $\C ^{F_v}$ 
as in Lemma \ref{Zvlem}. Define $U_v:={\mu _v}^{-1}(\Delta _v)$. 
If also $w\in V$, define $U_{v,\, w}$ as the right hand side 
of (\ref{Uvw}), and, if $z^v\in U_{v,\, w}$, define 
$\varphi _{w,\, v}(z^v):=z^w$, where $z^w\in\C ^{F_w}$ is 
given by (\ref{inFv}) and (\ref{notinFv}). 

Then $\varphi _{w,\, v}$ is a $T$\--equivariant symplectomorphism 
from $U_{v,\, w}$ onto $U_{w,\, v}$ such that 
$\mu _w=\mu _v\circ\varphi _{v,\, w}$ on $U_{w,\, v}$. 
The $\varphi _{w,\, v}$ satisfy the cocycle 
condition $\varphi _{w,\, v}\circ\varphi _{v,\, u}=\varphi _{w,\, u}$ 
where the left hand side is defined. 
Glueing together the Hamiltonian $T$\--spaces 
$U_v$, $v\in V$, with the momentum maps $\mu _v$, 
by means of the gluing 
maps $\varphi _{w,\, v}$, $v,\, w\in V$, we obtain a 
compact connected smooth symplectic manifold $\widetilde{M}$ 
with an effective Hamiltonian $T$\--action with a 
common momentum map $\widetilde{\mu} :\widetilde{M}\to T$ such that 
$\widetilde{\mu} (\widetilde{M})=\Delta$. In other words, 
$\widetilde{M}$ is a Delzant 
space for the Delzant polytope $\Delta$.  
\label{coordtransfcor}
\end{corollary}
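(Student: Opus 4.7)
The plan is to recognize that the $\varphi_{w,v}$ defined by the explicit formulas in (\ref{inFv}) and (\ref{notinFv}) are precisely the transition maps of the atlas $\{\varphi_v\}_{v\in V}$ on the reduced phase space $M$ from Corollary \ref{fibvcor}, and then transport all the required structure from $M$ to the glued space $\widetilde{M}$.

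First I would verify, using Lemma \ref{coordtranslem}, that on $U_{v,w}\subset U_v$ the map $\varphi_{w,v}$ agrees with $\varphi_w\circ\varphi_v^{-1}=\psi_w^{-1}\circ\psi_v$, regarded as a diffeomorphism $U_{v,w}\to U_{w,v}$. Since $\psi_v$ is a $T$\--equivariant symplectomorphism by Proposition \ref{fibvprop}, and $\mu_T\circ\psi_v=\mu_v$ on $U_v$ by Corollary \ref{fibvcor}, it follows that $\varphi_{w,v}$ is a $T$\--equivariant symplectomorphism from $U_{v,w}$ onto $U_{w,v}$ and that $\mu_w=\mu_v\circ\varphi_{v,w}$ on $U_{w,v}$. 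The cocycle condition $\varphi_{w,v}\circ\varphi_{v,u}=\varphi_{w,u}$ (on the triple intersection where it is defined) is then automatic, since this is the cocycle relation among the transition maps of an atlas on the manifold $M$. One could also verify the cocycle directly from the formulas by appealing to $(w\,v)(v\,u)=(w\,u)$ of Remark \ref{(w)rem}, but the atlas viewpoint makes this free.

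Next I would construct the diffeomorphism $\widetilde{M}\to M$. By definition $\widetilde{M}$ is the quotient of the disjoint union $\bigsqcup_{v\in V}U_v$ by the equivalence relation generated by $z^v\sim\varphi_{w,v}(z^v)$. Since the $\psi_v:U_v\to M_v\subset M$ together satisfy $\psi_v=\psi_w\circ\varphi_{w,v}$ on $U_{v,w}$, they descend to a well\--defined continuous map $\Psi:\widetilde{M}\to M$ whose restriction to each $U_v$ is $\psi_v$. Because the $M_v$ cover $M$ by Corollary \ref{fibvcor} and each $\psi_v$ is a diffeomorphism onto $M_v$, $\Psi$ is a bijective local diffeomorphism, hence a diffeomorphism. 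In particular $\widetilde{M}$ inherits Hausdorffness, compactness and connectedness from $M$, so the potential technical obstacle, namely checking that the quotient topology on $\widetilde{M}$ is Hausdorff, is handled by the existence of $\Psi$.

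Finally I would transport the symplectic form, the $T$\--action, and the momentum map from $M$ to $\widetilde{M}$ via $\Psi$; equivalently, the local symplectic forms (\ref{sigmav}) on the $U_v$, the local $T$\--actions through $\R^{F_v}/\Z^{F_v}$, and the local momentum maps $\mu_v$ glue to give global objects $\widetilde{\sigma}$, a global $T$\--action, and a global momentum map $\widetilde{\mu}:\widetilde{M}\to{\got t}^*$, precisely because the gluing maps $\varphi_{w,v}$ are $T$\--equivariant symplectomorphisms intertwining the $\mu_v$. Effectiveness of the $T$\--action and the identity $\widetilde{\mu}(\widetilde{M})=\Delta$ follow from the corresponding properties of $M$ stated at the end of Section \ref{delzantsec} and the relation $\widetilde{\mu}=\mu_T\circ\Psi$, together with $\mu_v(U_v)=\Delta_v$ from Lemma \ref{Zvlem} and $\bigcup_{v\in V}\Delta_v=\Delta$. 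Hence $\widetilde{M}$ is a Delzant space for $\Delta$.
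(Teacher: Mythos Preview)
Your proof is correct and follows exactly the route the paper indicates: the paper does not give a formal proof of this corollary but remarks immediately afterward that ``the Delzant space $\widetilde{M}$ is obviously isomorphic to the Delzant space $M$ \ldots\ and actually the isomorphism is used in the proof that $\widetilde{M}$ is a Delzant space for the Delzant polytope $\Delta$.'' You have simply spelled out that isomorphism via the charts $\psi_v$ and transported the structure, which is precisely what the paper has in mind.
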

The Delzant space $\widetilde{M}$ is obviously isomorphic to the 
Delzant space $M=\mu ^{-1}(\{ \lambda\})/N$ introduced in 
Section \ref{delzantsec}, and actually the isomorphism 
is used in the proof that $\widetilde{M}$ is a Delzant space 
for the Delzant polytope $\Delta$. The only purpose of 
Corollary \ref{coordtransfcor} is to exhibit the 
Delzant space as obtained from gluing together 
the $U_v$, $v\in V$, by means of the gluing maps 
$\varphi _{v,\, w}$, $v,\, w\in V$.  
\section{The toric variety}
\label{torvarsec}
Let $\T :=\{ z\in\C\mid |z|=1\}$ denote the unit circle 
in the complex plane. The mapping $t\mapsto u$ 
where $u_f=\op{e}^{2\pi\scriptop{i}t_f}$ for every $f\in F$ 
is an isomorphism from the torus $\R ^F/\Z ^F$ onto $\T ^F$, 
where $\T ^F$ acts on $\C ^F$ by means of coordinatewise 
multiplication and $\R ^F/\Z ^F$ acted on $\C ^F$ via the 
isomorphism from $\R ^F/\Z ^F$ onto $\T ^F$. The complexification 
$\T _{\C}$ of the compact Lie group $\T$ is the multiplicative group 
$\C ^{\times}$ of all nonzero complex numbers, and  
the complexification of $\T ^F$ is equal to $\T _{\C}^F:=
(\T _{\C})^F=(\C ^{\times})^F$, which also acts on 
$\C ^F$ by means of coordinatewise multiplication. 

The complexification $N_{\C}$ of $N$ is the subgroup 
$\op{exp}({\got n}_{\C})$ of $U_{\C}^F$, where 
$\got{n}_{\C}:=\got{n}\oplus\op{i}{\got n}\subset\C ^F$ 
denotes the complexification of ${\got n}$, viewed as a 
complex linear subspace, a complex Lie subalgebra, of the 
Lie algebra $\C ^F$ of $\T _{\C}^F$. In view of 
(\ref{tg}), we have, for every $v\in V$, that $N_{\C}$ is equal 
to the set of all $t\in \T _{\C}^F$ such that 
\begin{equation}
t_g=\prod_{f\in F\setminus F_v}\, {t_f}^{-(v)_f^g},
\quad g\in F_v. 
\label{NC}
\end{equation}
This implies that $N_{\C}$ is a closed subgroup of $\T _{\C}^F$ 
isomorphic to $\T _{\C}^{F\setminus F_v}$, 
and therefore $N_{\C}$ is a reductive complex algebraic group. 

If we define 
\begin{equation}
\C ^F_v:=\{ z\in\C ^F\mid z_f\neq 0\quad
\mbox{\rm for every}\quad f\in F\setminus F_v\} ,
\label{CFv}
\end{equation}
then it follows from (\ref{NC}) that the action of 
$N_{\C}$ on $\C ^F_v$ is free and proper. 
It follows that the action of $N_{\C}$ on 
\begin{equation}
\C ^F_{\Delta}=\bigcup _{v\in V}\,\C ^F_v
\label{CFDelta}
\end{equation}
is free and proper, and therefore the $N_{\C}$\--orbit space 
\begin{equation}
M^{\scriptop{toric}}:=\C ^F_{\Delta}/N_{\C}
\label{Mtoric}
\end{equation}
has a unique structure of a complex analytic manifold 
of complex dimension $n$ such that the canonical 
projection from $\C ^F_{\Delta}$ onto $M^{\scriptop{toric}}$ 
exhibits $\C ^F_{\Delta}$ as a principal $N_{\C}$\--bundle 
over $M^{\scriptop{toric}}$. On $M^{\scriptop{toric}}$ we still 
have the complex analytic action of the complex Lie group group 
$\T _{\C}^F/N_{\C}$, which is isomorphic to the complexification 
$T_{\C}$ of our real torus $T$ induced by the 
projection $\pi$. The complex analytic manifold 
$M^{\scriptop{toric}}$ together with the complex analytic 
action of $T_{\C}$ on it is the {\em toric variety defined by 
the polytope $\Delta$} in the title of this section. 

If $v\in V$ and $z\in\C ^F_v$, then it follows from (\ref{NC}) 
that there is a unique $t\in N_{\C}$ such that 
$t_f=z_f$ for every $f\in F\setminus F_v$, or 
in other words, $z=t\cdot \zeta$, where 
$\zeta\in\C ^F$ is such that $\zeta _f=1$ for every 
$f\in F\setminus F_v$. Let $S_v:\C ^{F_v}\to 
\C ^F_v$ be defined by $S_v(z^v)_f=z^v_f$ when $f\in F$ 
and $S_v(z^v)=1$ when $f\in F\setminus F_v$, 
as in Audin \cite[p. 159]{audin}. 
If $P_v:\C ^F_v\to \C ^F_v/N_{\C}$ denotes the canonical 
projection from $\C ^F_v$ onto the open subset 
$M^{\scriptop{toric}}_v:= \C ^F_v/N_{\C}$ of 
$M^{\scriptop{toric}}$, then $\Psi _v:=P_v\circ S_v$ 
is a complex analytic diffeomorphism from 
$\C ^{F_v}$ onto $M^{\scriptop{toric}}_v$. 
It is $T_{\C}$\--equivariant if we let $T_{\C}$ act on 
$\C ^{F_v}$ via $\T ^{F_v}_{\C}$ as in Lemma \ref{Zvlem}. 
We use the diffeomorphism $\Phi _v:={\Psi _v}^{-1}$ 
from $M^{\scriptop{toric}}_v$ onto $\C ^{F_v}$ as a 
coordinatization of the open subset $M^{\scriptop{toric}}_v$ 
of $M^{\scriptop{toric}}$. 

If $v,\, w\in V$, then 
\begin{eqnarray}
U_{v,\, w}^{\scriptop{toric}}
&:=&\Phi _v(M^{\scriptop{toric}}_v\cap M^{\scriptop{toric}}_w)
=\C ^{F_v}\cap{\Psi _v}^{-1}\circ\Psi _w(\C ^{F_w})
\nonumber\\
&=&\{ z^v\in\C ^{F_v}\mid (z^v)_f\neq 0
\quad\mbox{\rm for every}\quad f\in F_v\setminus F_w\} .  
\label{Uvwtoric}
\end{eqnarray}
Moreover, with a similar argument as for Lemma \ref{coordtranslem}, 
actually much simpler, we have that for every 
$z^v\in U^{\scriptop{toric}}_{v,\, w}$ the element  
$z^w:=\Phi _w\circ {\Phi _v}^{-1}(z^v)\in\C ^{F_w}$ is given by 
\begin{equation}
z^w_g=\prod_{f\in F_v}\, (z^v_f)^{(w)^g_f},\quad g\in F_w.  
\label{coordtranstoric}
\end{equation}
In this way the coordinate transformation 
$\Phi _w\circ {\Phi _v}^{-1}$ is a Laurent monomial mapping, 
much simpler than the coordinate transformation 
(\ref{inFv}), (\ref{notinFv}). 
It follows that the toric variety $M^{\scriptop{toric}}$ 
can be alternatively described as obtained by 
gluing the $n$\--dimensional complex vector spaces 
$\C ^{F_v}$, $v\in V$, together, with the 
maps (\ref{coordtranstoric}) as the gluing maps. 
This is the kind of toric varieties as introduced by 
Demazure \cite[Sec. 4]{demazure}. 

For later use we mention the following 
observation of Danilov \cite[Th. 9.1]{danilov}, 
which is also of interest in itself. 
\begin{lemma}
$M^{\scriptop{toric}}$ is simply connected. 
\label{danilovlem}
\end{lemma}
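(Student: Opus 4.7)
The plan is to apply a version of van Kampen's theorem to the open cover $\{M^{\scriptop{toric}}_v : v\in V\}$ of $M^{\scriptop{toric}}$. That these sets cover is built into the construction: from \eqref{CFDelta}, \eqref{Mtoric}, and $M^{\scriptop{toric}}_v=\C^F_v/N_{\C}$, the union of the charts is all of $M^{\scriptop{toric}}$. The precise form of the theorem I would invoke is: if a path-connected space $X$ admits an open cover by path-connected, simply connected subsets whose non-empty pairwise intersections are path-connected, then $\pi_1(X)=1$.

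Three hypotheses have to be checked. First, each $M^{\scriptop{toric}}_v$ is simply connected because $\Phi_v$ provides a complex analytic diffeomorphism $M^{\scriptop{toric}}_v\cong\C^{F_v}$, which is contractible. Second, by \eqref{Uvwtoric} the intersection $M^{\scriptop{toric}}_v\cap M^{\scriptop{toric}}_w$ is identified via $\Phi_v$ with $U^{\scriptop{toric}}_{v,\,w}$, and this set is the Cartesian product of copies of $\C$ indexed by $f\in F_v\cap F_w$ with copies of $\C\setminus\{0\}$ indexed by $f\in F_v\setminus F_w$; each factor is path-connected, hence so is the product. Third, $M^{\scriptop{toric}}$ itself is path-connected: the $N_{\C}$-orbit of $(1,\ldots,1)\in\C^F$ projects to a single point that lies in every chart $M^{\scriptop{toric}}_v$, so the path-connected charts are linked through a common point.

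The main obstacle is conceptual rather than computational: one must recognize that it is enough for the pairwise intersections to be path-connected, not simply connected. This is because van Kampen realizes $\pi_1(X)$ as a colimit built from the groups $\pi_1(U_i)$ amalgamated over the $\pi_1(U_i\cap U_j)$, and when every $\pi_1(U_i)$ is trivial the colimit is trivial regardless of the amalgamating groups, and no hypothesis on triple or higher intersections is needed. If one prefers to bypass the general theorem, the same conclusion can be proved by hand: subdivide any loop in $M^{\scriptop{toric}}$ into finitely many arcs, each contained in some $M^{\scriptop{toric}}_v$, using path-connectedness of the pairwise intersections to choose compatible transition points in $M^{\scriptop{toric}}_v\cap M^{\scriptop{toric}}_w$, and then contract each arc inside its contractible chart.
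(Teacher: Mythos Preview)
Your approach via van Kampen is different from the paper's and would work, but the precise statement you invoke is false. The claim ``if $X$ is covered by path-connected, simply connected open sets with path-connected pairwise intersections, then $\pi_1(X)=1$, and no hypothesis on triple intersections is needed'' is disproved by $S^1$ covered by three overlapping open arcs: each arc is contractible, each pairwise intersection is an arc (hence path-connected), the triple intersection is empty, and yet $\pi_1(S^1)=\Z$. Your ``by hand'' sketch fails for the same reason: contracting each subarc inside its chart does not produce a homotopy of the whole loop to a point; something must tie the pieces together at a common basepoint.

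The repair is already in your hands. You observed that the image of $(1,\dots,1)$ lies in every $M^{\scriptop{toric}}_v$; take this as the basepoint $x_0$. The correct surjectivity half of van Kampen (e.g.\ Hatcher, Theorem~1.20) says: if the open sets $A_\alpha$ are path-connected, each contains $x_0$, and each $A_\alpha\cap A_\beta$ is path-connected, then $*_\alpha\,\pi_1(A_\alpha,x_0)\to\pi_1(X,x_0)$ is onto. Since every $\pi_1(M^{\scriptop{toric}}_v)=1$, surjectivity alone gives $\pi_1(M^{\scriptop{toric}})=1$. Alternatively, you can simply check that all higher intersections are path-connected too: $\bigcap_j \C^F_{v_j}=\{z\in\C^F: z_f\neq 0 \text{ for } f\notin\bigcap_j F_{v_j}\}$ is a product of copies of $\C$ and $\C^\times$, and its $N_{\C}$-quotient is a continuous image of a path-connected set.

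For comparison, the paper's proof avoids van Kampen entirely. It fixes a single vertex $w$, notes that $M^{\scriptop{toric}}_w\cong\C^{F_w}$ is contractible, and observes from \eqref{Uvwtoric} that the complement $M^{\scriptop{toric}}\setminus M^{\scriptop{toric}}_w$ is a finite union of closed complex submanifolds of complex codimension one, hence real codimension two. Any loop can therefore be perturbed off this complement into $M^{\scriptop{toric}}_w$ and then contracted there. This argument uses only one chart and a transversality/general-position step, whereas your argument uses the full combinatorics of the cover; the paper's route is shorter here, but your van Kampen approach generalizes more readily to covers where no single open set is dense.
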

\begin{proof}
Let $w\in V$. It follows from (\ref{Uvwtoric}), for all $v\in V$, that
the complement of $M^{\scriptop{toric}}_w$ in 
$M_{\scriptop{toric}}$ is equal to the union of finitely 
closed complex analytic submanifolds of complex codimension 
one, whereas $M^{\scriptop{toric}}_w$ is contractible because 
it is diffeomorphic to the complex vector space $\C ^{F_w}$. 
Because complex codimension one is real codimension two, 
any loop in $M^{\scriptop{toric}}$ with base point in 
$M^{\scriptop{toric}}_w$ can be slightly deformed to  
such a loop which avoids the complement  of 
$M^{\scriptop{toric}}_w$ in 
$M_{\scriptop{toric}}$, that is, which is contained in 
$M^{\scriptop{toric}}_w$, after which it can be contracted 
within $M^{\scriptop{toric}}_w$ to the base point in 
$M^{\scriptop{toric}}_w$.  
\end{proof}
Recall the definition in Section \ref{delzantsec} of the reduced phase space 
$M=Z/N$. 
\begin{theorem}
The identity mapping from $Z$ into $\C ^F_{\Delta}$, 
followed by the canonical projection $P$ from 
$\C ^F_{\Delta}$ to $M^{\scriptop{toric}}=\C ^F_{\Delta}/N_{\C}$, 
induces a $T$\--equivariant diffeomorphism $\varpi$ from 
$M=Z/N$ onto $M^{\scriptop{toric}}$. It follows that 
each $N_{\C}$\--orbit in $\C ^F_{\Delta}$ intersects 
$Z$ in an $N$\--orbit in $Z$. 
\label{idthm}
\end{theorem}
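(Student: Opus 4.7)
My plan is to define $\varpi$, verify it is a well\--defined smooth $T$\--equivariant map, and then establish it is a diffeomorphism by comparing the two coordinatizations $\psi _v$ and $\Phi _v$ over each cell. The first step is to check that $Z\subset\C ^F_{\Delta}$: for $z\in Z$, Lemma~\ref{Zlem} gives $\xi\in\Delta$ with $|z_f|^2/2=\langle X_f,\,\xi\rangle +\lambda _f$, and choosing $v\in V$ with $\xi\in\Delta _v$ (possible since the $\Delta _v$ cover $\Delta$) yields $|z_{f'}|>0$ for every $f'\in F\setminus F_v$, so $z\in\C ^F_v\subset\C ^F_{\Delta}$. Because $N\subset N_{\C}$, the composition $Z\hookrightarrow\C ^F_{\Delta}\to M^{\scriptop{toric}}$ is $N$\--invariant and descends to a smooth map $\varpi$; $T$\--equivariance is immediate from the compatibility of the actions of $\R ^F/\Z ^F\subset\T _{\C}^F$ on $Z$ and on $\C ^F_{\Delta}$, which descend to the common action of $T$ on the two quotients.

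Next I would localize to the cells. The identity $Z_v=Z\cap\C ^F_v$ follows because $z\in Z_v$ if and only if $\mu _v(\rho _v(z))\in\Delta _v$, which translates via (\ref{Zv}) into $z_{f'}\neq 0$ for every $f'\in F\setminus F_v$. In particular $\varpi (M_v)\subset M^{\scriptop{toric}}_v$; conversely, since $N_{\C}\subset (\C ^{\times})^F$ preserves the nonvanishing of coordinates, any lift $z\in Z$ of a point in $\varpi ^{-1}(M^{\scriptop{toric}}_v)$ must lie in $\C ^F_v\cap Z=Z_v$, so $\varpi ^{-1}(M^{\scriptop{toric}}_v)=M_v$. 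The composition $\Phi _v\circ\varpi\circ\psi _v:U_v\to\C ^{F_v}$ is then computed by finding the unique $t\in N_{\C}$ that moves $s_v(z^v)$ into the form $S_v(\zeta )$: one takes $t_{f'}=1/r_{f'}(\mu _v(z^v))$ for $f'\in F\setminus F_v$ and reads off $t_g=\prod _{f'\in F\setminus F_v}\, r_{f'}(\mu _v(z^v))^{(v)_{f'}^g}$ from (\ref{NC}), yielding
\[
\zeta _g=z^v_g\;\prod _{f'\in F\setminus F_v}\, r_{f'}(\mu _v(z^v))^{(v)_{f'}^g},\quad g\in F_v,
\]
which is formula (\ref{zzeta}); it is smooth on $U_v$ because each $r_{f'}$ is smooth and strictly positive on $\Delta _v$.

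The hardest step is showing this coordinate map is a diffeomorphism of $U_v$ onto $\C ^{F_v}$. I would construct an explicit smooth inverse. Squaring the formula gives $|\zeta _g|^2=\prod_{f\in F}\, r_f(\xi )^{2(v)_f^g}$ with $\xi =\mu _v(z^v)$, so given $\zeta\in\C ^{F_v}$ one must recover a unique $\xi\in\Delta _v$ from these monomial equations. The key algebraic input is that the block $((v)_f^g)_{f,g\in F_v}$ of the exponent matrix is the identity, so that on the interior of $\Delta _v$ the system can be rewritten as
\[
\log (2\ell _g(\xi ))=\log |\zeta _g|^2-\sum _{f'\in F\setminus F_v}\, (v)_{f'}^g\,\log (2\ell _{f'}(\xi )),\quad g\in F_v,
\]
where $\ell _f(\xi ):=\langle X_f,\,\xi\rangle +\lambda _f$. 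A monotonicity or implicit function argument, using the convex geometry of $\Delta _v$ and the positivity of the $\ell _{f'}$ on it, produces a unique smooth $\xi (\zeta )\in\Delta _v$; from this, $|z^v_g|^2=2\ell _g(\xi (\zeta ))$ and $\op{arg}\, z^v_g=\op{arg}\,\zeta _g$ reconstruct $z^v\in U_v$. Bijectivity on each cell, combined with $\varpi ^{-1}(M^{\scriptop{toric}}_v)=M_v$, yields global bijectivity of $\varpi$, and smoothness of the local inverses promotes $\varpi$ to a diffeomorphism.

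The concluding statement, that each $N_{\C}$\--orbit in $\C ^F_{\Delta}$ meets $Z$ in exactly one $N$\--orbit, is then the set\--theoretic content of $\varpi$ being a bijection from $Z/N$ onto $\C ^F_{\Delta}/N_{\C}$.
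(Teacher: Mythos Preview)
Your route and the paper's diverge at the decisive point. The paper never tries to invert $\theta _v=\Phi _v\circ\varpi\circ\psi _v$ directly. Instead it argues topologically: on each $Z_v$ the $N_{\C}$\--orbit directions allow one to move the moduli $|z_{f'}|$, $f'\in F\setminus F_v$, freely, and since these are precisely the defining equations of $Z_v$ over $U_v$, the $N_{\C}$\--orbits are transversal to $Z$. Hence $\varpi$ is a submersion; its image is open, and also compact (since $M$ is), hence closed, so by connectedness $\varpi (M)=M^{\scriptop{toric}}$. A surjective submersion between connected manifolds of equal dimension is a covering map, and Lemma~\ref{danilovlem} ($M^{\scriptop{toric}}$ simply connected) then forces $\varpi$ to be a bijection. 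The formula (\ref{zzeta}) is established only \emph{after} the theorem, in Lemma~\ref{zzetalem}, as a consequence rather than an ingredient.

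Your alternative, building $\varpi ^{-1}$ chart by chart, is in principle viable---it is essentially the strategy alluded to in Remark~\ref{idthmrem} (Guillemin's convexity argument)---but you have compressed the entire difficulty into one sentence: ``A monotonicity or implicit function argument \ldots\ produces a unique smooth $\xi (\zeta )\in\Delta _v$.'' This is exactly the hard step. An implicit function argument gives only a local inverse; to get global bijectivity on the interior one typically recognises the map as the gradient of a strictly convex potential, and one still has to handle the boundary strata (where some $\zeta _g=0$ and your logarithmic reformulation breaks down) and match them with the face structure of $\Delta _v$. None of this is routine, and indeed the paper remarks in Section~\ref{examplesec} that already for Hirzebruch surfaces there is no explicit formula for $\theta _v^{-1}$, so ``explicit smooth inverse'' overstates what can be done. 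The paper's covering\--space argument bypasses all of this analysis at the modest cost of Lemma~\ref{danilovlem}; your approach, once the convexity argument is properly carried out, would give a self\--contained construction of $\varpi ^{-1}$ without appealing to $\pi _1$.
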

\begin{proof}
Because $N$ is a closed Lie subgroup of $N_{\C}$, 
we have that the  mapping $P:Z\to \C ^F_{\Delta}/N_{\C}$ 
induces a mapping $\varpi :Z/N\to \C ^F_{\Delta}/N_{\C}$, 
which moreover is smooth. 

If $v\in V$, $z\in Z_v$, then it 
follows from (\ref{NC}) that the $t_f$, $f\in F\setminus F_v$, 
of an element $t\in N_{\C}$ can take arbitrary values, 
and therefore the $|z_f|$, $f\in F\setminus F_v$ 
can be moved arbitrarily by means of infinitesimal 
$N_{\C}$\--actions. Because $Z$ is defined by prescribing the 
$|z_f|$, $f\in F\setminus F_v$, as a smooth 
function of the $z_f$, $f\in F_v$, and the $Z_v$, $v\in V$, 
form an open covering of $Z$, this shows that 
at each point of $Z$ the $N_{\C}$\--orbit is transversal 
to $Z$, which implies that $\varpi$ is a submersion. 

It follows that $\varpi (M)$ is an open subset of 
$M^{\scriptop{toric}}$. Because $M$ is compact and 
$\varpi$ is continuous, $\varpi (M)$ is compact, and 
therefore a closed subset of $M^{\scriptop{toric}}$. 
Because $M^{\scriptop{toric}}$ is connected, the conclusion is 
that $\varpi (M)=M^{\scriptop{toric}}$, that is, $\varpi$ 
is surjective. 

Because $\varpi$ is a surjective submersion, $\op{dim}_{\R}M=2n=
\op{dim}_{\R}M^{\scriptop{toric}}$, and $M$ is 
connected, we conclude that $\varpi$ is a covering 
map. Because $M^{\scriptop{toric}}$ is simply connected, 
see Lemma \ref{danilovlem}, we conclude that 
$\varpi$ is injective, that is, $\varpi$ is a diffeomorphism. 
\end{proof}
\begin{remark}
Theorem \ref{idthm} is the last statement 
in Delzant \cite{d}, with no further details of the proof. 
Audin \cite[Prop. 3.1.1]{audin} gave a proof using 
gradient flows, whereas the injectivity has been proved 
in \cite[Sec. A1.2]{g} using the principle that 
the gradient of a strictly convex function defines 
an injective mapping. 
\label{idthmrem}
\end{remark}
Note that in the definition of the toric variety 
$M^{\scriptop{toric}}$, the real numbers $\lambda _f$, $f\in F$, 
did not enter, whereas these numbers certainly enter in the 
definition of $M$, the symplectic form on $M$, and the 
diffeomorphism $\varpi :M\to M^{\scriptop{toric}}$. 
Therefore the symplectic form $\sigma ^{\scriptop{toric}}_{\lambda} :=
(\varpi ^{-1})^*(\sigma )$ on $M^{\scriptop{toric}}$ 
on $M^{\scriptop{toric}}$ will depend on the choice of $\lambda\in\R ^F$. 
On the symplectic manifold 
$(M^{\scriptop{toric}},\,\sigma ^{\scriptop{toric}}_{\lambda})$, 
the action of the maximal compact subgroup $T$ of $T_{\C}$ 
is Hamiltonian, with momentum mapping equal to 
\begin{equation}
\mu ^{\scriptop{toric}}_{\lambda} :=\mu\circ\varpi ^{-1}:
M^{\scriptop{toric}}\to {\got t}^*,
\label{mutor}
\end{equation}
where $\mu ^{\scriptop{toric}}_{\lambda}(M^{\scriptop{toric}})=\Delta$, 
where we note that $\Delta$ in (\ref{Deltaineq}) depends on $\lambda$. 

In the following lemma we compare the 
reduced phase space coordinatizations with the 
toric variety coordinatizations. 
\begin{lemma}
Let $v\in V$. Then $M_v^{\scriptop{toric}}
=\varpi (M_v)$, and 
\begin{equation}
\theta _v:= {\Psi _v}^{-1}\circ\varpi\circ\psi _v  
\label{thetav}
\end{equation} 
is a $\T ^{F_v}$\--equivariant diffeomorphism from $U_v$ 
onto $\C ^{F_v}$. 

For each $z^v\in U_v$, the element 
$\zeta ^v:=\theta _v(z^v)$ is given in terms of $z^v$ by 
\begin{equation}
\zeta ^v_f=z^v_f\,\prod_{f'\in F\setminus F_v}\, r_{f'}(\mu _v(z^v))^{(v)^f_{f'}},
\quad f\in F_v,
\label{zzeta}
\end{equation}
where the functions $r_{f'}:\Delta\to\R _{\geq 0}$ are given by (\ref{rv}). 
We have 
\begin{equation}
\mu _v(z_v)=\mu _T(\psi _v(z^v))
=\mu ^{\scriptop{toric}}_{\lambda}(\Psi _v(\zeta ^v)),
\label{mutoricv}
\end{equation}
and $z^v={\theta _v}^{-1}(\zeta _v)$ is given in terms of $\zeta ^v$ by 
\begin{equation}
z^v_f=\zeta ^v_f
\,\prod_{f'\in F\setminus F_v}\, r_{f'}(\xi )^{-(v)^f_{f'}},
\quad f\in F_v, 
\label{zetaz}
\end{equation}
where $\xi$ is the element of $\Delta$ equal to the right hand side 
of (\ref{mutoricv}).  

\label{zzetalem}
\end{lemma}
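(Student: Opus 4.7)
The plan is to unwind the three factors in $\theta_v = \Psi_v^{-1}\circ\varpi\circ\psi_v$ and track the image of $z^v\in U_v$ step by step, using the explicit sections $s_v$ (Proposition \ref{fibvprop}) and $S_v$.

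First I would establish $\varpi(M_v)=M^{\scriptop{toric}}_v$. Since $\varpi$ is induced by the inclusion $Z\hookrightarrow \C^F_{\Delta}$ followed by the $N_{\C}$\--projection $P$, and since $\C^F_v$ is $N_{\C}$\--invariant, the key claim is the set\--theoretic identity $Z\cap\C^F_v=Z_v$. For $z\in Z$, Lemma \ref{Zlem} gives a unique $\xi\in\Delta$ with $|z_f|^2/2=\langle X_f,\xi\rangle+\lambda_f$ for all $f\in F$, and $\xi=\mu_v(\rho_v(z))$; the condition $z_{f'}\neq 0$ for every $f'\in F\setminus F_v$ is then equivalent, via (\ref{Deltaineq}) and (\ref{Deltav}), to $\xi\in\Delta_v$, i.e.\ to $\rho_v(z)\in U_v$. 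Hence $\varpi(M_v)=P(Z_v)\subset M^{\scriptop{toric}}_v$. Conversely, any point of $M^{\scriptop{toric}}_v$ has a representative in $\C^F_v$, and Theorem \ref{idthm} asserts that its $N_{\C}$\--orbit meets $Z$; by $N_{\C}$\--invariance of $\C^F_v$ the intersection lies in $Z\cap\C^F_v=Z_v$, so $\varpi(M_v)\supset M^{\scriptop{toric}}_v$. Consequently $\theta_v\colon U_v\to\C^{F_v}$ is a composition of three diffeomorphisms, and its $\T^{F_v}$\--equivariance follows because each of $\psi_v$, $\varpi|_{M_v}$, $\Psi_v$ is $T$\--equivariant, where $T$ acts on $\C^{F_v}$ via the isomorphism $\pi\circ\iota_v\colon\R^{F_v}/\Z^{F_v}\to T$ of Lemma \ref{complem}, identified with $\T^{F_v}$.

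Next I would compute $\zeta^v:=\theta_v(z^v)$ explicitly. By construction $\zeta^v$ is the unique element of $\C^{F_v}$ such that $S_v(\zeta^v)$ lies in the $N_{\C}$\--orbit of $s_v(z^v)$, so I seek $t\in N_{\C}$ with $t\cdot s_v(z^v)=S_v(\zeta^v)$. For $f'\in F\setminus F_v$ we have $s_v(z^v)_{f'}=r_{f'}(\mu_v(z^v))>0$ while $S_v(\zeta^v)_{f'}=1$, which forces $t_{f'}=r_{f'}(\mu_v(z^v))^{-1}$. The description (\ref{NC}) of $N_{\C}$ then determines, for $g\in F_v$,
\[
t_g=\prod_{f'\in F\setminus F_v}{t_{f'}}^{-(v)^g_{f'}}
=\prod_{f'\in F\setminus F_v}r_{f'}(\mu_v(z^v))^{(v)^g_{f'}},
\]
and since $S_v(\zeta^v)_g=\zeta^v_g$ and $s_v(z^v)_g=z^v_g$ for $g\in F_v$, the equation $\zeta^v_g=t_g\, z^v_g$ is precisely (\ref{zzeta}). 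The momentum map identity (\ref{mutoricv}) is immediate from $\mu_T\circ\psi_v=\mu_v$ (Corollary \ref{fibvcor}) together with $\mu^{\scriptop{toric}}_{\lambda}\circ\varpi=\mu_T$ (by (\ref{mutor})) and $\varpi\circ\psi_v=\Psi_v\circ\theta_v$.

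Finally, the inverse formula (\ref{zetaz}) is obtained by algebraically solving (\ref{zzeta}) for $z^v_f$: the factors $r_{f'}(\xi)$ with $f'\in F\setminus F_v$ are strictly positive on $\Delta_v$, so they can be inverted, and $\xi=\mu_v(z^v)$ is recovered from $\zeta^v$ through (\ref{mutoricv}). The main obstacle is the bookkeeping with the integer exponents $(v)^g_{f'}$ coming from (\ref{NC}): one must verify that the $t\in\T_{\C}^F$ cooked up componentwise actually satisfies the $N_{\C}$\--constraint, which is automatic only because (\ref{NC}) uses those same exponents to define $N_{\C}$. Beyond this, every step is a direct substitution.
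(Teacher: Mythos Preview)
Your proposal is correct and follows essentially the same route as the paper: both establish $Z_v=Z\cap\C^F_v$ to get $\varpi(M_v)=M_v^{\scriptop{toric}}$, and both obtain (\ref{zzeta}) by writing down the unique $t\in N_{\C}$ with $t\cdot s_v(z^v)=S_v(\zeta^v)$, using (\ref{NC}) to pin down the $F_v$\--components of $t$. The only cosmetic difference is that the paper first writes down $t$ and then checks $t\in N_{\C}$, whereas you impose $t\in N_{\C}$ from the start and read off $t_g$ from (\ref{NC}); the content is identical.
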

\begin{proof}
It follows from Lemma \ref{Zvlem} and the paragraph 
preceding Proposition \ref{fibvprop} that if $z^v\in\rho ^v(Z)$, 
then $z^v\in U_v$ if and only if $z^v_{f'}\neq 0$ for every 
$f'\in F\setminus F_v$. That is, the set $Z_v$ in 
Proposition \ref{fibvprop} is equal to $Z\cap\C ^F_v$. 
It therefore follows from Theorem \ref{idthm} that 
each $N_{\C}$\--orbit in the $N_{\C}$\--invariant subset 
$\C ^F_v$ of $\C ^F_{\Delta}$ intersects 
the $N$\--invariant subset $Z_v$ of $Z$ in an 
$N$\--orbit in $Z_v$, that is,  
\[
M^{\scriptop{toric}}_v
=P_v(\C ^F_v)
=\varpi (p_v(Z_v))
=\varpi (M_v).
\]

If $z^v\in U_v$, then Proposition \ref{fibvprop} implies that 
$s_v(z^v)_f=z^v_f$ for every $f\in F_v$ and 
\[
s_v(z^v)_{f'}=r_{f'}(\mu _v(z^v)),\quad f'\in F\setminus F_v.
\]
If we define $t\in\T ^F_{\C}$ by 
\begin{eqnarray*}
t_{f'}&=&r_{f'}(\mu _v(z^v))^{-1},\quad f'\in F\setminus F_v, \\
t_f&=&\prod_{f'\in F\setminus F_v}\, 
r_{f'}(\mu _v(z^v))^{(v)^f_{f'}},\quad f\in F_v,
\end{eqnarray*}
then $(t\cdot s_v(z^v))_{t'}=1$ for every $t'\in F\setminus F_v$ and, 
for every $f\in F_v$,  $\zeta ^v_f:=(t\cdot s_v)_f$ is equal to 
the right hand side of (\ref{zzeta}). That is, 
$t\cdot s_v(z^v)=S_v(\zeta ^v)$, see the definition of $S_v$ 
in the paragraph preceding (\ref{Uvwtoric}).  On the other hand, 
it follows from (\ref{NC}) that $t\in N_{\C}$, and therefore 
\[
\Psi _v(\zeta _v)=P_v(t\cdot s_v(z^v))=P_v(s_v(z^v))
=\varpi\circ p_v(s_v(z^v))
=\varpi\circ\psi _v(z^v),
\]
that is, $\zeta ^v={\Psi _v}^{-1}\circ\varpi\circ\psi _v(z^v)$. 
\end{proof}

\begin{corollary}
Let $s$ be the relative interior of a face of $\Delta$. 
Then ${\mu _T}^{-1}(s)$ is equal to a stratum 
$S$ of the orbit type stratification in $M$ of the $T$\--action, 
and also equal to the preimage under $\varpi :M\to M^{\scriptop{toric}}$ 
of a $T_{\C}$\--orbit in $M^{\scriptop{toric}}$. 
If $s=\{v \}$ for a vertex $v$, then ${\mu _T}^{-1}(s)=\{ m_v\}$ 
for the unique fixed point $m_v$ in $M$ for the $T$\--action 
such that $\mu _T(m_v)=v$. 

The mapping $s\mapsto{\mu _T}^{-1}(s)$ 
is a bijection from the set $\Sigma _{\Delta}$ of all relative interiors of 
faces of $\Delta$ onto the set $\Sigma$ of all strata of the 
orbit type stratificiation in $M$ for the action of $T$. 
If $s,\, s'\in\Sigma _{\Delta}$ 
then $s$ is contained in the closure of $s'$ in $\Delta$ if and only 
if ${\mu _T}^{-1}(s)$ is contained in the closure of 
${\mu _T}^{-1}(s')$ in $M$. 

The domain of definition $M_v$ of $\varphi _v$ in $M$ is equal to 
the union of the $S\in\Sigma$ such that $m_v$ belongs to the closure of 
$S$ in $M$. The domain of definition $M_v^{\scriptop{toric}}=
\varpi (M_v)$ of $\Phi _v$ is equal to the union of the 
corresponding strata of the $T$\--action in $M^{\scriptop{toric}}$, 
each of which is a $T_{\C}$\--orbit in $M^{\scriptop{toric}}$. 
$M_v$ and $M_v^{\scriptop{toric}}$ are open cells in 
$M$ and $M^{\scriptop{toric}}$, respectively.   
\label{stratacor}
\end{corollary}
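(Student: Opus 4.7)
The plan is to reduce everything to a local analysis in each chart $\psi_v : U_v \to M_v$ of Proposition \ref{fibvprop} and then glue across the cover $\{M_v\}_{v\in V}$ of $M$. Inside $U_v \subset \C^{F_v}$ the torus $T$ acts by coordinatewise rotation via $\pi\circ\iota_v$ and $\mu_T \circ \psi_v = \mu_v$. The local stratification is read off from the vanishing of coordinates: for $G \subseteq F_v$ set $S_G := \{z \in \C^{F_v} : z_f = 0 \iff f \in G\}$. Because $\{X_f\}_{f\in F_v}$ is a $\Z$-basis of $\got{t}_{\Z}$, each $S_G$ is a single orbit-type stratum for the $T$-action with stabilizer $\op{exp}(\sum_{f\in G}\R X_f)$, and is simultaneously a single $\T_{\C}^{F_v}$-orbit. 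From equation (\ref{muv}) combined with property i) in the definition of $\Delta$, the relative interior of the face $\bigcap_{f\in G} f$ meets $\mu_v(U_v) = \Delta_v$ precisely when $G \subseteq F_v$, and in that case its $\mu_v$-preimage in $U_v$ is exactly $S_G \cap U_v$.

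Next I would globalize. Every face of $\Delta$ has a vertex $v$ in its closure and thus meets $\Delta_v$, so for every $s \in \Sigma_\Delta$ we have $\mu_T^{-1}(s) \cap M_v = \psi_v(S_G \cap U_v)$; being an orbit-type stratum is a $T$-invariant, chart-independent property, so the pieces assembled from different $M_v$ form a single stratum $S = \mu_T^{-1}(s)$ of $M$. For the toric side, formula (\ref{zzeta}) of Lemma \ref{zzetalem} shows that $\theta_v = \Phi_v\circ\varpi\circ\psi_v$ preserves vanishing patterns (each factor $r_{f'}(\mu_v(z^v))$ is strictly positive on $U_v$), so $\varpi(S) \cap M_v^{\scriptop{toric}}$ is a single $\T_{\C}^{F_v}$-orbit under $\Phi_v$, and these patch into one $T_{\C}$-orbit in $M^{\scriptop{toric}}$. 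The vertex case corresponds to $G = F_v$, giving $\mu_v^{-1}(v) = \{0\}$ and hence $\mu_T^{-1}(v) = \{\psi_v(0)\} =: \{m_v\}$. Injectivity of $s\mapsto\mu_T^{-1}(s)$ is immediate since distinct relative interiors are disjoint in $\Delta$; surjectivity follows since every point of $M$ lies in some $M_v$ and is classified there by some $G \subseteq F_v$.

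For the order relation, $s \subseteq \overline{s'}$ implies $\mu_T^{-1}(s) \subseteq \overline{\mu_T^{-1}(s')}$ by continuity of $\mu_T$, and conversely a vertex $v\in\overline{s}$ puts both $s$ and $s'$ inside $\Delta_v$, where the containment reads $G \supseteq G'$ for the defining index sets and hence becomes $S_G \subseteq \overline{S_{G'}}$ in $\C^{F_v}$, which is clear. Specializing $s = \{v\}$ yields $v \in \overline{s'} \iff m_v \in \overline{\mu_T^{-1}(s')}$, so the identity $M_v = \mu_T^{-1}(\Delta_v)$ of Corollary \ref{fibvcor}, combined with the fact that $\Delta_v$ is precisely the union of the relative interiors of faces containing $v$, gives the description of $M_v$ as the union of those $S \in \Sigma$ whose closure contains $m_v$; applying $\varpi$ transports this to $M_v^{\scriptop{toric}}$. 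Finally, $M_v$ and $M_v^{\scriptop{toric}}$ are open cells because Lemma \ref{zzetalem} supplies the diffeomorphism $\theta_v : U_v \to \C^{F_v} \cong \R^{2n}$, exhibiting $M_v \cong M_v^{\scriptop{toric}} \cong \C^{F_v}$. The main subtlety is keeping the vanishing-pattern bookkeeping consistent across chart overlaps, but this is automatic: stabilizers and $T_{\C}$-orbits are intrinsic, and formulas (\ref{inFv})--(\ref{notinFv}) visibly preserve the loci $S_G$.
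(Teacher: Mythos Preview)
Your proof is correct and follows essentially the same approach as the paper: work in the chart $\psi_v$, identify $\mu_T^{-1}(s)\cap M_v$ with a set of the form $\{z\in U_v : z_f=0 \iff f\in G\}$ (the paper writes $U_v\cap\C^{F_v}_G$), observe that these are precisely the $\T^{F_v}$-orbit-type strata and the $\T_{\C}^{F_v}$-orbits, and use $\theta_v$ to carry this over to the toric side. Your treatment is in fact more thorough than the paper's, which leaves the bijectivity and the order-preservation statements implicit; one small imprecision is that in the converse direction of the order relation you only get $s'\cap\Delta_v\neq\emptyset$ (from openness of $M_v$), not $s'\subseteq\Delta_v$, but the local comparison of vanishing sets $G\supseteq G'$ still goes through and yields $s\subseteq\overline{s'}$.
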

\begin{proof}
There exists a vertex $v$ of $\Delta$ 
such that $v$ belongs to the closure of $s$ in 
${\got t}^*$, which implies that $s$ is disjoint from 
all $f'\in F\setminus F_v$. Let $F_{v,\, s}$ denote the 
set of all $f\in F_v$ such that $s\subset f$, where 
$F_{v,\, s}=\emptyset$ if and only if $s$ is the 
interior of $\Delta$. 
For any subset $G$ of $F_v$, let 
$\C ^{F_v}_G$ denote the set of all 
$z\in\C ^{F_v}$ such that $z_f=0$ if $f\in G$ and 
$z_f\neq 0$ if $f\in F_v\setminus G$. It follows from 
$\mu _v=\mu _T\circ\psi _v$ and  (\ref{mu-1f}) 
that $\psi _v^{-1}({\mu _T}^{-1}(s))$ is equal to 
$U_v\cap\C ^{F_v}_G$ with $G=F_{v,\, s}$. 
The diffeomorphism $\theta _v$ maps this set onto 
the set $\C ^{F_v}_G$ with $G=F_{v,\, s}$. 
Because the sets of the form $\C ^{F_v}_G$ 
with $G\subset F_v$ are the strata of the 
orbit type stratification of the $\T ^{F_v}$\--action 
on $\C ^{F_v}$, and also equal to the $(\T _{\C})^{F_v}$\--orbits 
in $\C ^{F_v}$, the first statement of the corollary follows.  

The second statement follows from ${\mu _v}^{-1}(\{ v\})=\{ 0\}$ 
and the fact that $0$ is the unique fixed point of the 
$\T ^{F_v}$\--action in $U_v$. 

If $s\in\Sigma _{\Delta}$ and $v\in V$, then $m_v$ belongs to the closure 
of ${\mu _T}^{-1}(s)$ if and only if $s$ is not contained 
in any $f'\in F\setminus F_v$. This proves the 
characterization of the domain of 
definition $M_v:=Z_v/N={\mu _T}^{-1}(\Delta _v)$ of $\varphi _v$.  
The last statement follows 
from the fact that $\Phi _v$ is a diffeomorphism from 
$M_v^{\scriptop{toric}}$ onto the vector space $\C ^{F_v}$, 
and $\varpi$ is a diffeomorphism from $M_v$ onto $M_v^{\scriptop{toric}}$. 
\end{proof}

\begin{remark}
If $v,\, w\in V$, then 
\[
\varphi _w\circ {\varphi _v}^{-1}
={\psi _w}^{-1}\circ\psi _v
= {\theta _w}^{-1}\circ ({\Psi _w}^{-1}\circ\Psi _v)\circ\theta _v
={\theta _w}^{-1}\circ (\Phi _w\circ {\Phi _v}^{-1})\circ\theta _v.  
\]
Using the formula (\ref{coordtranstoric}) for 
$\Phi _w\circ {\Phi _v}^{-1}$, this can be used 
in order to obtain the formulas  (\ref{inFv}), (\ref{notinFv}) as a 
consequence of (\ref{zzeta}). In the proof, 
it is used that $\xi :=\mu _v(z^v)=\mu _w(z^w)$, 
$|z^v_f|=r_f(\xi )$ if $f\in F_v\setminus F_w$, and 
\[
\sum_{f\in F_v}\, (v)^f_{f'}\, (w)^g_f=(w)^g_{f'}
\]
if $f'\in F\setminus F_v$ and $g\in F_w$. 
\label{zzetarem}
\end{remark}

In the following corollary we describe the symplectic form 
$\sigma ^{\scriptop{toric}}_{\lambda}$ on the toric variety 
$M^{\scriptop{toric}}$  
in the toric variety coordinates. 
\begin{corollary}
For each $v\in V$, the symplectic form 
$({\Phi _v}^{-1})^*(\sigma ^{\scriptop{toric}}_{\lambda})$ 
on $\C ^{F_v}$ 
is equal to $({\theta _v}^{-1})^*(\sigma _v)$, where $\sigma _v$ 
is the standard symplectic form on $\C ^{F_v}$ given by (\ref{sigmav}). 
\label{zzetacor}
\end{corollary}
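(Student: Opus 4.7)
The plan is to chase definitions and pullbacks, using the factorization of $\theta_v$ recorded in Lemma \ref{zzetalem}. Rewriting $\theta_v = {\Psi_v}^{-1}\circ\varpi\circ\psi_v$ as the identity of diffeomorphisms
\[
\varpi\circ\psi_v = \Psi_v\circ\theta_v : U_v \longrightarrow M^{\scriptop{toric}}_v,
\]
I obtain two presentations of the same map from $U_v$ onto $M_v^{\scriptop{toric}}$, and pulling back $\sigma^{\scriptop{toric}}_{\lambda}$ along both presentations will yield the claim.

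First I would invoke two inputs already available. By Proposition \ref{fibvprop}, $\psi_v$ is a $T$\--equivariant symplectomorphism from $(U_v,\,\sigma_v)$ onto $(M_v,\,\sigma_M|_{M_v})$, so $\psi_v^*\sigma_M = \sigma_v$. By the very definition $\sigma^{\scriptop{toric}}_{\lambda} := (\varpi^{-1})^*\sigma_M$, we equivalently have $\varpi^*\sigma^{\scriptop{toric}}_{\lambda} = \sigma_M$. Functoriality of pullback then gives
\[
(\varpi\circ\psi_v)^*\sigma^{\scriptop{toric}}_{\lambda}
=\psi_v^*\bigl(\varpi^*\sigma^{\scriptop{toric}}_{\lambda}\bigr)
=\psi_v^*\sigma_M
=\sigma_v.
\]

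Substituting $\varpi\circ\psi_v = \Psi_v\circ\theta_v$ on the left, this reads $\theta_v^*\bigl(\Psi_v^*\sigma^{\scriptop{toric}}_{\lambda}\bigr) = \sigma_v$. Since $\theta_v:U_v\to\C^{F_v}$ is a diffeomorphism (Lemma \ref{zzetalem}), I can apply $(\theta_v^{-1})^*$ to both sides and use $\Psi_v = \Phi_v^{-1}$ to conclude
\[
(\Phi_v^{-1})^*\sigma^{\scriptop{toric}}_{\lambda}
=(\theta_v^{-1})^*\sigma_v,
\]
which is exactly the statement of the corollary.

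I do not expect any genuine obstacle here: all of the nontrivial geometric content, namely the symplectic reduction identifying $\sigma_M$, the symplectomorphism property of $\psi_v$, and the transport of the symplectic structure via $\varpi$, has already been absorbed into Proposition \ref{fibvprop}, Lemma \ref{zzetalem}, and the definition of $\sigma^{\scriptop{toric}}_{\lambda}$. What remains for the corollary is only a bookkeeping chase through a commutative diagram of pullbacks, and the only point requiring minor care is keeping straight the direction of each pullback, in particular that $\varpi^{-1}$ appears in the definition of $\sigma^{\scriptop{toric}}_{\lambda}$ precisely so that $\varpi^*\sigma^{\scriptop{toric}}_{\lambda} = \sigma_M$.
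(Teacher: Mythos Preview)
Your proof is correct and is exactly the verification the paper has in mind: the corollary is stated without proof, being treated as an immediate consequence of the definition $\sigma^{\scriptop{toric}}_{\lambda}:=(\varpi^{-1})^*\sigma_M$, the symplectomorphism property of $\psi_v$ from Proposition~\ref{fibvprop}, and the factorization $\theta_v=\Psi_v^{-1}\circ\varpi\circ\psi_v$ of Lemma~\ref{zzetalem}. Your pullback chase is precisely this unpacking, and there is nothing to add.
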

Because $r_{f'}(\mu _v(z^v))^2$ is an inhomogeneous linear function of 
the quantities $|z_f^v|^2$, it follows from (\ref{zzeta}) that 
the equations which determine the $|z^v_f|^2$ in terms 
of the quantities $|\zeta _f^v|^2$ are $n$ polyomial 
equations for the $n$ unknowns $|z^f_v|^2$, $f\in F_v$, 
where the coefficients of the polyomials are inhomogeneous 
linear functions of the $|\zeta _f^v|$, $f\in F_v$. In this sense the 
$|z_f^v|^2$, $f\in F_v$, are algebraic functions 
of the $|\zeta ^v_f|^2$, $f\in F_v$, and substituting these in 
(\ref{zzeta}) we obtain that the diffeomorphism 
${\theta _v}^{-1}$ from $\C ^{F_v}$ onto $U_v$ is an 
algebraic mapping.  If $\Delta$ is 
a simplex, when $M^{\scriptop{toric}}$ is the $n$\--dimensional 
complex projective space, we have an explicit formula 
for ${\theta _v}^{-1}$, see Subsection \ref{projsubsec}. 
However, already in the case that 
$\Delta$ is a planar quadrangle, when 
$M^{\scriptop{toric}}$ is a complex two\--dimensional 
Hirzebruch surface, we do not have an explicit formula 
for ${\theta _v}^{-1}$. See Subsection \ref{hirzebruchsubsec}. 

Summarizing, we can say that in the toric variety coordinates 
the complex structure is the standard one and the coordinate 
transformations are the relatively simple Laurent monomial 
transformations (\ref{coordtranstoric}). However, in the 
toric variety coordinates the $\lambda$\--dependent 
symplectic form in general is given by quite complicated 
algebraic functions. On the other hand, in the 
reduced phase space coordinates the symplectic form 
is the standard one, but the coordinate transformations 
(\ref{inFv}), (\ref{notinFv}) are more complicated. Also 
the complex structure in the reduced phase space 
coordinates, which depends on $\lambda$, 
is given by more complicated formulas. 
\begin{remark}
It is a challenge to compare the formula in Corollary \ref{zzetacor} 
for the symplectic form in toric variety coordinates with 
Guillemin's formula in \cite[Th. 3.5 on p. 141]{g}
and \cite[(1.3)]{gpaper}. Note that in the latter the pullback 
by means of the momentum mapping appears of a function 
on the interior of $\Delta$, where in general we do not have 
a really explicit formula for the momentum mapping in 
toric variety coordinates. 
\label{gformularem}
\end{remark}
\section{Examples}
\label{examplesec}
\subsection{The complex projective space}
\label{projsubsec}
Let $\Delta$ be an $n$\--dimensional simplex in ${\got t}^*$. 
A little bit of puzzling shows that there is a $\Z$\--basis 
$e_i$, $1\leq i\leq n$, of the integral lattice ${\got t}_{\Z}$ in ${\got t}$, 
such that, with the notation  
\begin{equation}
e_0=\, -\sum_{i=1}^n\, e_i,
\label{e0}
\end{equation}
the $X_f$, $f\in F$, are the $e_i$, $0\leq i\leq n$. 
That is, in the sequel we write $F=\{ 0,\, 1,\,\ldots ,\, n\}$.  
The Delzant simplex (\ref{Deltaineq}) is determined by the 
inequalities $\langle e_i,\,\xi\rangle +\lambda _i\geq 0$, 
$0\leq i\leq n$, which has a non\--empty interior if and only if 
\begin{equation}
\gamma :=\sum_{i=0}^n\,\lambda _i>0. 
\label{sumpos}
\end{equation}
In the sequel we take for $v$ the vertex determined by the 
equations $\langle e_i,\,\xi\rangle +\lambda _i=0$ for all $1\leq i\leq n$, 
where $F_v=\{ 1,\,\ldots ,\, n\}$. 
If we write $\xi _i=\langle e_i,\,\xi\rangle$, 
$1\leq i\leq n$, when $\xi\in {\got t}^*$, then (\ref{muv}) yields that 
\[
\mu _v( z^v)_i=|z_i|^2/2-\lambda _i,\quad 1\leq i\leq n.
\]
It follows from (\ref{rv}) that 
\[
r_0(\xi )=(2(-\sum_{i=1}^n\,\xi _i+\lambda _0))^{1/2},
\]
and therefore (\ref{zzeta}) yields that 
\begin{equation}
\zeta ^v_i=z^v_i\, (2\gamma -\| z^v\| ^2)^{-1/2},\quad 1\leq i\leq n,
\label{zzetaproj}
\end{equation}
where we have written 
\[
\| z^v\| ^2=\sum_{i=1}^n\, |z^v_i|^2. 
\]
Note that $U_v$ is the open ball in $\C ^n$ with center at the origin 
and radius equal to $(2c)^{1/2}$. 

The equations (\ref{zzetaproj}) 
imply that 
\[
\| \zeta ^v\| ^2=\| z_v\| ^2/(2\gamma -\| z^v\| ^2),
\]
hence 
\[
\| z^v\| ^2=2\gamma \,\|\zeta ^v\| ^2/(1+\|\zeta ^v\| ^2). 
\]
Therefore the mapping ${\theta ^v}^{-1}:\zeta ^v\mapsto z^v$ 
is given by the explicit formulas 
\begin{equation}
z^v_i=\zeta ^v_i\, (2\gamma /(1+\|\zeta ^v\| ^2))^{1/2},\quad 1\leq i\leq n. 
\label{zetazproj}
\end{equation}
It can be verified that the symplectic form 
$({\theta _v}^{-1})^*(\sigma _v)$, where 
\[
\sigma _v=(1/2\pi )\,\sum_{i=1}^n\,\op{d}\! x^v_i\wedge\op{d}\! y^v_i
\]
is the standard symplectic form in (\ref{sigmav}), is equal to 
$\gamma$ times the Fubini\--Study form in 
Griffiths and Harris \cite[p. 30, 31]{gh}. 
In view of Remark \ref{cohomrem} this 
agrees with the fact that the de Rham 
cohomology class of the Fubini\--Study form is Poincar\'e dual to 
the homology class of a complex projective hyperplane 
in the complex projective space, see 
Griffiths and Harris \cite[p. 122]{gh}. 
\subsection{The Hirzebruch surface}
\label{hirzebruchsubsec} 
Let $n=2$ and let $\Delta$ be a quadrangle in the ${\got t}^*$ plane. 
A little bit of puzzling shows that there is an 
$m\in\Z _{\geq 0}$ and a $\Z$\--basis 
$e_1$, $e_2$ of the integral lattice ${\got t}_{\Z}$ in ${\got t}$, 
such that the $X_f$, $f\in F$, are the $e_i$, $1\leq i\leq 4$, 
with $e_3= \, -e_1+m\, e_2$, and $e_4=\, -e_2$. 
We recognize the toric variety 
$M^{\scriptop{toric}}$ as the Hirzebruch surface 
$\Sigma _m$, see Hirzebruch 
\cite{h}. 

The Delzant polytope (\ref{Deltaineq}) is determined by the 
inequalities $\langle e_i,\,\xi\rangle +\lambda _i\geq 0$, 
$1\leq i\leq 4$, which is a quadrangle if and only if 
\begin{equation}
\gamma _{\pm}:=\lambda _1+\lambda _3\pm m\,\lambda _4>0,\quad
\label{sumpmpos}
\end{equation}
 which inequalities imply that $\lambda _2+\lambda _4
=\gamma _++\gamma _->0$.   

In the sequel we take for $v$ the vertex determined by the 
equations $\langle e_i,\,\xi\rangle +\lambda _i=0$ for $i=1,\, 2$, 
where $F_v=\{ 1,\, 2\}$. 
If we write $\xi _i=\langle e_i,\,\xi\rangle$, 
$1\leq i\leq 2$, when $\xi\in {\got t}^*$, then (\ref{muv}) yields that 
\[
\mu _v( z^v)_i=|z_i|^2/2-\lambda _i,\quad 1\leq i\leq 2.
\]
It follows from (\ref{rv}) that 
\[
r_3(\xi )=(2(-\xi _1+m\,\xi _2+\lambda _3))^{1/2},
\quad
r_4(\xi )=(2(-\xi _2+\lambda _4))^{1/2}
\]
and therefore (\ref{zzeta}) yields that 
\begin{eqnarray}
\zeta ^v_1&=&z^v_1\, (2\gamma _-|z^v_1|^2 +m\, |z^v_2|^2)^{-1/2},
\label{zzeta1}\\
\zeta ^v_2&=&z^v_2\, (2\gamma _-|z^v_1|^2 +m\, |z^v_2|^2)^{m/2}
\, (2(\gamma _++\gamma _-)-|z^v_2|^2)^{-1/2}.
\label{zzeta2}
\end{eqnarray}
If we write $t_i=|z^v_i|^2$ and $\tau _i=|\zeta ^v_i|^2$, 
then this leads to the equations 
\begin{eqnarray*}
\tau _1&=&t_1/(2\gamma_- -t_1+m\, t_2),\\
\tau _2&=&t_2\, (2\gamma _- +t_1+m\, t_2)^m
/(2(\gamma _+ +\gamma _-)-t_2)
\end{eqnarray*}
for $t_1,\, t_2$. If we solve $t_1$ from the first equation, 
\[
t_1=(2\gamma _- +m\, t_2)\,\tau _1/(1+\tau _1),
\]
and substitute this into the second equation, then 
this leads to the polynomial equation 
\begin{equation}
(1+\tau _1)^m\,\tau _2\, 
(2(\gamma _+ +\gamma _-) -t_2)=t_2\, (2\gamma _- +m\, t_2)^m
\label{t2eq}
\end{equation}
of degree $m+1$ for $t_2$. If we substract the left hand side from the 
right hand side then the derivative with respect 
to $t_2$ is strictly positive, and one readily obtains 
that for every $\tau _1,\,\tau _2\in\R _{\geq 0}$ there 
is a unique solution $t_2\in\R _{\geq 0}$, confirming the first statement in 
Lemma \ref{zzetalem}. 

On the other hand, if we work over $\C$, and view both the 
parameter  $\varepsilon := (1+\tau _1)^m\,\tau _2$ and 
the unknown $t_2$ as elements of the complex projective 
line $\Proj ^1$, then the equation (\ref{t2eq}) defines 
a complex algebraic curve $C$ in the $(t_2,\, \varepsilon )$\--plane 
$\Proj ^1\times\Proj ^1$, where the restriction to $C$ 
of the projection to the first variable $t_2$ is a complex analytic 
diffeomorphism from $C$ onto $\Proj ^1$, as on $C$ we have 
that $\varepsilon$ is a complex analytic function of $t_2$. 
In particular $C$ is irreducible. The restriction to $C$ 
of the projection to the second variable $\varepsilon$ is 
an $(m+1)$\--fold branched covering. Over $\varepsilon =0$ 
and over $\varepsilon =\infty$ we have that $m$ of the $m+1$ 
branches come together, whereas there are two more branch points 
on the $\varepsilon$\--line over which only two of the branches 
come together. The fact that $C$ is irreducible implies that 
the part of $C$ over the complement of the branch points 
is connected, and therefore the analytic continuation of any  
solution $t_2$ of (\ref{t2eq}), as a complex analytic function 
of $\varepsilon$ in the complement of the branch points, 
will reach each other branch if $\varepsilon$ runs over 
a suitable loop. In other words, the 
solution $t_2$ is an algebraic function of $\varepsilon$ 
of degree $m+1$, and no branch of a solution is 
of lower degree. This holds in particular 
for our solutions $t_2\in\R _{\geq 0}$ for $\varepsilon\in\R _{\geq 0}$.

\noindent
J.J. Duistermaat\\
Mathematisch Instituut, Universiteit Utrecht\\
P.O. Box 80 010, 3508 TA Utrecht, The Netherlands\\
e\--mail: duis@math.uu.nl

\bigskip\noindent
A. Pelayo\\
Department of Mathematics, University of Michigan\\
2074 East Hall, 530 Church Street, Ann Arbor, MI 48109--1043, USA\\
e\--mail: apelayo@umich.edu

\end{document}